\def\BState{\State\hskip-\ALG@thistlm}
\DeclareMathOperator{\supp}{supp}
\DeclareMathOperator{\Ima}{Im}
\DeclareMathOperator{\diag}{diag}
\DeclareMathOperator{\spn}{span}
\newtheorem{theorem}{Theorem}
\newtheorem{lemma}{Lemma}
\newtheorem{proposition}{Proposition}
\newtheorem{corollary}{Corollary}
\newtheorem{definition}{Definition}
\newtheorem{example}{Example}
\newtheorem{remark}{Remark}
\numberwithin{equation}{section}
\begin{document}


\title{Linear conjugacy of chemical kinetic systems}

\author{
Allen L. Nazareno\textsuperscript{a}, Raymond Paul L. Eclarin\textsuperscript{b}, Eduardo R. Mendoza\textsuperscript{c,d,e}, and \name{Angelyn R. Lao\textsuperscript{c}\thanks{CONTACT Angelyn R. Lao. Email: angelyn.lao@dlsu.edu.ph}}
\affil{\textsuperscript{a}Institute of Mathematical Sciences and Physics, University of the Philippines Los Ba\~{n}os, Laguna, 4031 Philippines; \textsuperscript{b}Department of Mathematics, Mariano Marcos State University, Ilocos Norte, 2906 Philippines; \textsuperscript{c}Mathematics and Statistics Department, De La Salle University, Manila, 0922 Philippines; \textsuperscript{d}Max Planck Institute of Biochemistry, 85152 Martinsried,  Germany; \textsuperscript{e}LMU Faculty of Physics, Geschwister -Scholl- Platz 1, 80539 Munich Germany}
}

\maketitle

\begin{abstract}
Two networks are said to be linearly conjugate if the solution of their dynamic equations can be transformed into each other by a positive linear transformation. The study on dynamical equivalence in chemical kinetic systems was initiated by Craciun and Pantea in 2008 and eventually led to the Johnston-Siegel Criterion for linear conjugacy (JSC). Several studies have applied Mixed Integer Linear Programming (MILP) approach to generate linear conjugates of MAK (mass action kinetic) systems, Bio-CRNs (which is a subset of hill-type kinetic systems when the network is restricted to digraphs), and PL-RDK (complex factorizable power law kinetic) systems. In this study, we present a general computational solution to construct linear conjugates of any ``rate constant-interaction function decomposable" (RID) chemical kinetic systems, wherein each of its rate function is the product of a rate constant and an interaction function. We generate an extension of the JSC to the complex factorizable (CF) subset of RID kinetic systems and  show that any non-complex factorizable (NF) RID kinetic system can be dynamically equivalent to a CF system via transformation. We show that linear conjugacy can be generated for any RID kinetic systems by applying the JSC to any NF kinetic system that are transformed to CF kinetic system.
\end{abstract}

\keywords{linear conjugacy; chemical reaction network; chemical kinetic system; Johnston-Siegel Criterion; dynamical equivalence; rate constant-interaction function decomposable (RID)
}

\section{Introduction}

This paper presents a general computational solution to the problem of constructing linear conjugates of a chemical reaction network where each rate function is the product of a rate constant and an interaction function. We denote such a chemical kinetic system as a ``rate constant-interaction function decomposable" (RID) kinetic system. Nearly all systems studied in Chemical Reaction Network Theory (CRNT) are RID kinetic systems, but recently ``��variable $k$"�� systems have been introduced in \cite{csl:13}. Furthermore, various kinetics such as weakly monotonic ones, are not explicitly required to have this form. Our approach is based on two new results:  
\begin{enumerate}
\item The extension of the Johnston-Siegel Criterion for linear conjugacy (JSC) to the complex factorizable (CF) subset of RID kinetic systems, i.e., those whose interaction map $I_K: \Omega \rightarrow \mathbb{R}^\mathscr{R}$ factorizes via the space of complexes $\mathbb{R}^\mathscr{C}$:  $I_K = I_k\circ \psi_K$ with $\psi_K:  \Omega \rightarrow \mathbb{R}^\mathscr{C}$, with $\mathbb{R}_>^{\mathscr{S}}\subset \Omega \subset \mathbb{R}_\ge^\mathscr{S}$, as factor map and $I_k = diag(k)\circ \rho'$   with  $\rho': \mathbb{R}^\mathscr{C} \rightarrow \mathbb{R}^\mathscr{R}$ assigning the value at a reactant complex to all its reactions (Theorem~\ref{thm:4}).

\item The dynamic equivalence of any non-complex factorizable (NF) RID kinetic system to a CF-system (Theorem~\ref{thm:1}).
\end{enumerate}

An essential ingredient of the proofs of both results is the coincidence of the interaction maps of the kinetics considered. In the JSC extension (Theorem 4), the equality of the factor maps $\psi_K = \psi_K'$ (which is clearly equivalent to that of the interaction maps) is assumed. The CF-RM (Complex Factorizable by Reactant Multiples) transformation used to provide the dynamical equivalence in Theorem 1 is based on the concept of CF subsets of a reactant complex, which are defined as subsets of its reactions with the same interaction map.  Determining the equality of functions (with infinite definition domains) may be computationally challenging, depending on their complexity and expression format. However, for a large subset of RID kinetic systems, which we call RID systems with interaction parameter maps (and denote with RIPK), the computational feasibility is ensured. Such systems are characterized by the existence of a map $P_K: \mathscr{R} \rightarrow \mathbb{R}^p$ such that $P_K = P_{K'}$ implies $I_K = I_{K'}$. The exponent $p$ is typically (but not always) a multiple of $m$ (= number of species), and written as an $r \times p$ matrix. The interaction parameter map is easily seen as a generalization of the kinetic order matrix $F$ of power law kinetic systems.\\

Most RID kinetic systems, whose rate functions are specified explicitly, have interaction parameter maps, including all biochemical formalisms introduced to date. We discuss how the mixed integer linear programming (MILP) algorithms originally introduced for mass action kinetics (MAK)  systems can be extended to RIP kinetic systems. We illustrate this and other results of the paper with an example of Hill-type kinetics (HTK), which was originally introduced as ``Saturation Cooperativity Formalism" (SC Formalism) in \cite{csl:6}.\\

The foundations for the study of dynamic equivalence in chemical kinetic systems were laid in the paper of Craciun and Pantea \cite{csl:17}. Important contributions to the theory in a more general context were previously provided by G. Farkas in \cite{farkas}. The MILP-based computational approach to dynamic equivalence of MAK systems was pioneered by the group led by G. Szederk$\acute{e}$nyi and K. Hangos in Budapest, with further contributions from the lab of J. Banga in Vigo. Independently, M. Johnston and D. Siegel initiated the study of linear conjugacy, which led to the JSC for MAK systems. The three groups then collaborated in extending the MILP approach to linear conjugacy (a detailed discussion of the work up to 2013 can be found in \cite{csl:19}). Further developments included the extension to ``Bio-CRNs" (whose rate functions are mass action functions divided by positive polynomials in the species variables) by G$\acute{a}$bor et al. \cite{csl:18} and to complex factorizable power law kinetic systems (denoted by PL-RDK) by Cortez et al. \cite{csl:8}.\\

The paper is organized as follows: Section 2 collects the fundamentals of chemical reaction networks and kinetic systems required for the later sections. The central concept of ``CF subsets of a reactant complex" and the method based on it are introduced in Section 3. The first main result (Theorem 1) is proved using the transformation. A Subspace Coincidence Theorem for the kinetic and stoichiometric subspaces (KSSC) of NF kinetic systems further illustrates the usefulness of CF-RM. Section 4 formulates the linear conjugacy problem for RID kinetic systems and extends the Johnston-Siegel Criterion (JSC) for linear conjugacy to complex factorizable RID systems. This is combined with the CF-RM method to provide the general computational solution to construct linear conjugates of any RID system. A running example (Examples 2 - 4), in Sections 3 and 4, further demonstrates the usefulness of the computational solution by deriving the existence of complex balanced equilibria of an NF power law kinetic system through construction of a weakly reversible, deficiency one PL-TIK system which is linear conjugate to the CF-RM transform. Section 5 focusses on the large subset of RID systems which have interaction parameter maps, for which the computational solution is always feasible. Details of the MILP-based algorithm are provided in Section 6. Section 7 illustrates the results of the paper using a reference system introduced in \cite{csl:6}. Conclusions and an outlook constitute Section 8. Tables of acronyms and frequently used symbols are provided in Supplementary Materials.


\section{Materials and method}

{\label{sec:2}}
We recall the necessary concepts of chemical reaction networks and the mathematical notation used throughout the paper adopted from the papers \cite{csl:8,csl:9,csl:3,csl:20}. 

\subsection{Fundamentals of chemical reaction networks}

We begin with the definition of a chemical reaction network.

\begin{definition}
A \textbf{chemical reaction network} is a triple $\mathscr{N}=(\mathscr{S},\mathscr{C},\mathscr{R})$ of three non-empty finite sets:
\begin{enumerate}
\item A set \textbf{species} $\mathscr{S}$,
\item A set $\mathscr{C}$ of \textbf{complexes}, which are non-negative integer linear combinations of the species, and
\item A set $\mathscr{R} \subseteq \mathscr{C} \times \mathscr{C}$ of \textbf{reactions} such that
\begin{itemize}
\item $(y,y) \notin \mathscr{R}$ for all $y \in \mathscr{C}$, and
\item for each $y \in \mathscr{C}$, there exists a $y' \in \mathscr{C}$ such that $(y,y') \in \mathscr{R}$ or $(y',y) \in \mathscr{R}.$
\end{itemize}
\end{enumerate}

\end{definition}

We denote with $m$ the number of species, $n$ the number of complexes and $r$ the number of reactions in a CRN.

A complex is called \textbf{monospecies} if it consists of only one species, i.e., of the form $kX_i$, $k$ a non-negative integer and $X_i$ a species. It is called \textbf{monomolecular} if $k=1$, and is identified
with the \textbf{zero complex} for $k=0$. A zero complex represents the \enquote{outside} of the system studied, from which chemicals can flow into the system at a constant rate and to which they can flow out at a linear rate (proportional to the concentration of the species). In biological systems, the \enquote{outside} also stands for the degradation of a species. 

A chemical reaction network $(\mathscr{S}, \mathscr{C}, \mathscr{R})$ gives rise to a digraph with complexes as vertices and reactions as arcs. However, the digraph determines the triple uniquely only if an additional property is considered in the definition: $\mathscr{S}=\bigcup$\{\ supp $y$ for $y \in \mathscr{C}\}$, i.e., each species appears in at least one complex. With this additional property, a CRN can be equivalently defined as follows.
 
\begin{definition}
A \textbf{chemical reaction network} is a digraph $(\mathscr{C}, \mathscr{R})$ where each vertex has positive degree and stoichiometry, i.e., there is a finite set $\mathscr{S}$ (whose elements are called \textbf{species}) such that $\mathscr{C}$ is a subset of $\mathbb{Z}^{\mathscr{S}}_{\geq}.$ Each vertex is called a \textbf{complex} and its coordinates in $\mathbb{Z}^{\mathscr{S}}_{\geq}$ are called \textbf{stoichiometric coefficients}. The arcs are called \textbf{reactions}.
\end{definition}

Two useful maps are associated with each reaction:

\begin{definition}
The \textbf{reactant map} $\rho: \mathscr{R} \rightarrow \mathscr{C}$ maps a reaction to its reactant complex while the \textbf{product map} $\pi: \mathscr{R} \rightarrow \mathscr{C}$ maps it to its product complex. We denote $|~\rho (\mathscr{\pi})~|$ with $n_r$, i.e., the number of reactant complexes. 
\end{definition}

Connectivity concepts in Digraph Theory apply to CRNs, but have slightly differing names. A connected component is traditionally called a \textbf{linkage class}, denoted by $\mathscr{L}$, in CRNT. A subset of a linkage class where any two elements are connected by a directed path in each direction is known as a \textbf{strong linkage class}. If there is no reaction from a complex in the strong linkage class to a complex outside the same strong linkage class, then we have a \textbf{terminal strong linkage class}. We denote the number of linkage classes with $l$, that of the strong linkage classes with $sl$ and that of terminal strong linkage classes with $t$. Clearly, $sl \geq t \geq l.$

Many features of CRNs can be examined by working in terms of finite dimensional spaces $\mathbb{R}^{\mathscr{S}}, \mathbb{R}^{\mathscr{C}}, and \mathbb{R}^{\mathscr{R}},$ which are referred to as species space, complex space, and reaction space, respectively. We can view a complex $y \in \mathscr{C}$ as a vector in $\mathbb{R}^{\mathscr{C}}$ (called \textit{complex vector}) by writing $y = \sum _{x \in \mathscr{S}} y_x x$, where $y_x$ is the stoichiometric coefficient of species $x$.

\begin{definition}
The \textbf{reaction vectors} of a CRN $(\mathscr{S}, \mathscr{C}, \mathscr{R})$ are the members of the set $\displaystyle{\{y'-y \in \mathbb{R}^{\mathscr{S}}~|~(y,y') \in \mathscr{R}\}}.$ The \textbf{stoichiometric subspace} $S$ of the CRN is the linear subspace of $\mathbb{R}^{\mathscr{S}}$ defined by 
	$$S: span \{y'-y \in \mathbb{R}^{\mathscr{S}}~|~(y,y') \in \mathscr{R}\}.$$
The \textbf{rank} of the CRN, $s$, is defined as $s=dim~S.$
\end{definition}

\begin{definition}
The \textbf{incidence map} $I_a: \mathbb{R}^{\mathscr{R}} \rightarrow \mathbb{R}^{\mathscr{C}}$ is defined as follows. For $f: \mathscr{R} \rightarrow \mathbb{R}$, then $\displaystyle{I_a(f)(v) = - f(a)}$ and $f(a)$ if $v = \rho(a)$ and $v = \pi(a)$, respectively, and are $0$ otherwise.
\end{definition}

\noindent Equivalently, it maps the basis vector $\omega_a$ to  $\omega_{v'} -  \omega_v$ if $a: v \rightarrow v'$. It is clearly a linear map, and its matrix representation (with respect to the standard bases $\omega_a$, $\omega_{v}$) is called the \textbf{incidence matrix}, which can be described as 
\begin{center}
\[
 (I_a)_{i,j} = 
  \begin{cases} 
   -1 & \text{if } \rho(a_j) = v_i, \\
   1       & \text{if } \pi(a_j) = v_i,\\
   0		& \text{otherwise}.
  \end{cases}
\]
\end{center}

Let $I$ be the incidence matrix of the directed graph $D = (V, E)$. Then rank $I = n -l$, where $l$ is the number of connected components of $D$. A non-negative integer, called the deficiency, can be associated to each CRN. This number has been the center of many studies in CRNT due to its relevance in the dynamic behavior of the system. The \textbf{deficiency} of a CRN is the integer $\delta = n-l-s$.  The \textbf{reactant subspace} $R$ is the linear space in $\mathbb{R}^\mathscr{S}$ generated by the reactant complexes. Its dimension, denoted by $q$, is called the \textbf{reactant rank} of the network. Meanwhile, the \textbf{reactant deficiency} $\delta_p$ is the difference between the number of reactant complexes and the reactant rank, i.e., $\delta_p = n_r -q$.

\subsection{Fundamentals of chemical kinetic systems}

We now introduce the fundamentals of chemical kinetic systems. We begin with the general definitions of kinetics from  \cite{csl:22}:

\begin{definition}
A \textbf{kinetics} for a CRN $\mathscr{N}=(\mathscr{S}, \mathscr{C}, \mathscr{R})$ is an assignment of a rate function $K_j: \Omega_K \rightarrow \mathbb{R}_\geq$ to each reaction $r_j \in \mathscr{R}$, where $\Omega_K$ is a set such that $\mathbb{R}^\mathscr{S}_> \subseteq \Omega_K \subseteq \mathbb{R}^\mathscr{S}_\geq$, $c\wedge d \in \Omega_K$ whenever $c,d \in \Omega_K,$ and 
$$K_j(c) \geq 0, \quad \forall c \in \Omega_K.$$
A kinetics for a network $\mathscr{N}$ is denoted by $\displaystyle{K=(K_1,K_2,...,K_r):\Omega_K \to {\mathbb{R}}^{\mathscr{R}}_{\geq}}$. A \textbf{chemical kinetics} is a kinetics $K$ satisfying the positivity condition: for each reaction $r_j:y\rightarrow y', K_j(c)>0$ iff $\supp y\subset\supp c$. The pair $(\mathscr{N}, {K})$ is called the \textbf{chemical kinetic system} (CKS).
\end{definition}

In the definition, $c \wedge d$ is the bivector of $c$ and $d$ in the exterior algebra of $\mathbb{R}^\mathscr{S}$. Once a kinetics is associated with a CRN, we can determine the rate at which the concentration of each species evolves at composition $c$.\\


Power-law kinetics is defined by an $ r \times m$ matrix $F =[F_{ij}],$ called the \textbf{kinetic order matrix}, and vector $k \in \mathbb{R}^\mathscr{R}$, called the \textbf{rate vector}. In power-law formalism, the kinetic orders of the species concentrations are real numbers.

\begin{definition}
A kinetics $K: \mathbb{R}^\mathscr{S}_> \rightarrow \mathbb{R}^\mathscr{R}$ is a \textbf{power-law kinetics} (PLK) if
$$K_i (x) = k_ix^{F_i} \quad \forall i = 1, ... , r$$
with $k_i \in \mathbb{R}_>$ and $ F_{ij} \in \mathbb{R}.$
\end{definition}

\begin{definition}
A chemical kinetics $K:\Omega \rightarrow \mathbb{R}^\mathscr{R}_\ge$ is \textbf{complex factorizable (CF)} if there is $k\in \mathbb{R}^\mathscr{R}_>$ and a mapping $\psi_K: \Omega \rightarrow \mathbb{R}^{\mathscr{C}}$ such that $K=I_k\circ \psi_K$, where $I_k$ is the $\bf{k-}$\textbf{interaction map} defined by $I_k: \mathbb{R}^\mathscr{C} \rightarrow \mathbb{R}^\mathscr{R}$. The set of complex factorizable kinetics is denoted as $\mathscr{CFK(N)}$. 
\end{definition}

It can be deduced from the definition that if a chemical kinetics $K$ is complex factorizable, then its \textbf{complex formation rate function} $g=A_k\circ \psi_K$ and its \textbf{species formation rate function (SFRF)} $f=Y\circ A_k\circ \psi_K$. The $f(x) = \frac{dx}{dt}$ is the ODE or dynamical system of the {CKS}. A zero of $f$ is an element $c$ of $\mathbb{R}^\mathscr{S}$ such that $f(c)=0$. A zero of $f$ is called an equilibrium (or steady state) of the ODE system. The SFRF contains three maps:  map of complexes, Laplacian map, and factor map.

\begin{definition} The \textbf{map of complexes} $Y: \mathbb{R}^\mathscr{C} \rightarrow \mathbb{R}^\mathscr{S}$ is defined by its values on the standard basis $\{\omega_y\}$ , $y$ a non-zero complex: $Y(\omega_y) = y$ and extending it linearly to all elements of $\mathbb{R}^\mathscr{C}$.  Its matrix, denoted with $Y$ (called the \textbf{matrix of complexes}), is an $m \times n$ matrix, its rows indexed by the species and its column by the complexes, with $y_{ij}$ being the stoichiometric coefficient of the $j^{th}$ complex in the $i^{th}$ species. In other words, the columns are the complexes written as column vectors.
\end{definition}

\begin{definition} The linear transformation $A_k: \mathbb{R}^\mathscr{C} \rightarrow \mathbb{R}^\mathscr{C}$ called \textbf{Laplacian map} is the mapping defined by $A_kx:=\sum_{(i,j)\in \mathscr{R}} k_{ij}x_i(\omega_j-\omega_i)$, where $x_i$ refers to the $i^{\text{th}}$ component of $x\in \mathbb{R}^\mathscr{C}$ relative to the standard basis. Its matrix representation is the $n\times n$ matrix such that
 $$(A_k)_{ij} = 
  \begin{cases} 
   k_{ji} & \text{if } i\neq j, \\
   k_{jj}-\sum_{i'=1}^n k_{ji'} & \text{if } i= j.
  \end{cases}
$$
\noindent The label $k_{ji}$  is called the rate constant and is associated to the reaction $(j,i)\in \mathscr{R}$.
\end{definition}

\begin{definition} The \textbf{factor map} $\psi_K: \Omega \rightarrow \mathbb{R}^\mathscr{C}$ is defined as
$$
 (\psi_K)_{c}(x) = 
  \begin{cases} 
   (x^F)_i & \text{if } c \text{ is a reactant complex of a reaction } i, \\
   1 &otherwise.
  \end{cases}
$$
\end{definition}

\begin{definition}
A \textbf{positive equilibrium} or steady state $x$ is an element of $\mathbb{R}^\mathscr{S}_>$ for which $f(x)=0$. The set of positive equilibria of a chemical kinetic system is denoted by $E_+(\mathscr{N} , {K})$.
\end{definition}

Two networks are said to be \textbf{linearly conjugate} if the solutions of their dynamic equations can be transformed into each other by a positive linear transformation \cite{csl:20,csl:21}.

\begin{definition}
Let $\Phi(x_0, t)$ and $\psi (x_0, t)$ be flows associated to kinetic systems $M$ and $M'$ respectively. $M$ and $M'$ are said to be \textbf{linearly conjugate} if there exists a bijective linear mapping $h : \mathbb{R}^n_{>0} \rightarrow \mathbb{R}^n_{>0}$ such that $h(\Phi(x_0, t)) =  \psi(h(x_0, t))$ for all $x_0 \in \mathbb{R}^n_{>0}$.
\end{definition}

\begin{remark}
In \cite{csl:16a}, it is shown that the bijection h in the previous definition corresponds to multiplication with a diagonal matrix with positive diagonal entries. The diagonal entries form the conjugacy vector $c$. More precisely, if $N$, $N'$ are the stoichiometric matrices and $K$, $K'$ are the  kinetics of the systems $M$ and $M'$ respecively, then they are linearly conjugate if and only if $NK = \diag (c)N'K'$. 
\end{remark}

Linear conjugacy is  a generalization of the concept of dynamical equivalence. 

\begin{definition}
Two kinetic systems are dynamically equivalent if the conjugacy vector $c = (1,\cdots ,1)$, i.e., if $NK = N'K'$.
\end{definition}

In relation to linear conjugacy, if the mapping $h$ is trivial, $M$ and   $M'$  are said to be \textbf{dynamically equivalent} \cite{csl:8} .

\subsection{Rate constant-Interaction map Decomposable (RID) kinetics}
To date, nearly all chemical kinetics studied in CRNT have constant rates, i.e. for each reaction $r$, the kinetic function $K_r: \Omega_K \rightarrow \mathbb{R}^\mathscr{R}$ can be written in the form  $K_r(x) = k_r I_{K,r}(x)$, with a positive real number $k_r$ (called a rate constant) and an interaction map $I_{K,r}$. Recently however, G. Craciun and collaborators \cite{csl:13,csl:12} have introduced \textbf{variable $k$ systems}, where the rates may vary between an upper and lower bound. Furthermore, there are kinetics sets such as the weakly monotonic kinetics studied in \cite{csl:11} or the span surjective kinetics introduced in \cite{csl:1} which do not explicitly require constant rates. The \textbf{fractal kinetics} studied primarily by physical chemists, e.g. Brouers \cite{csl:12} have rate values given by a function of exponential type. In view of this, we introduce the term \textbf{Rate constant-Interaction map Decomposable} (RID) kinetics for all chemical kinetics with constant rates and denote the set with RIDK.

In \cite{csl:9} (see also \cite{csl:1}), we introduce a special subset of $\mathscr{CFK(N)}$, which is the set of power law kinetics with reactant-determined kinetic orders, denoted by $\mathscr{PL-RDK (N)}$. A PLK system has a \textbf{reactant-determined kinetic orders} (of type PL-RDK) if for any two reactions $i,j$ with identical reactant complexes, the corresponding rows of kinetic orders in $V$ are identical, i.e., $v_{ik}=v_{jk}$ for $k=1,2,...,m$.

We note also in  \cite{csl:1} that $\mathscr{PL-RDK(N)}$ includes mass action kinetics (MAK) and coincides with the set of GMAK systems recently introduced by \cite{csl:10} if the vertices map $y: \mathscr{C} \rightarrow R^m$ of the GMAK system  is injective. They also constitute the subset of power law systems for which various authors claimed that their results ``hold for the complexes with real coefficients" are valid. 

Another important property of a complex factorizable kinetics is ``factor span surjectivity":\\

\begin{definition}
Let $f:V\rightarrow W$ be a map between finite dimensional vector spaces $V$ and $W$. $f$ is \textbf{span surjective} if and only if $\spn(\Ima f)=W$.
\end{definition}

\noindent In \cite{csl:1}, it is shown that $f$ is span surjective if and only if its coordination functions are linearly independent.

\begin{definition}
A complex factorizable kinetics $K$ is \textbf{factor span surjective} if its factor map $\psi_K$ is span surjective. $\mathscr{FSK(N)}$ denotes the set of factor span surjective kinetics on a network $\mathscr{N}$. 
\end{definition}

We characterized in \cite{csl:1} a factor span surjective PL-RDK system. 

\begin{proposition}
A PL-RDK system is factor span surjective if and only if no rows corresponding in the kinetics order matrix $F$ corresponding to different reactant complexes coincide (i.e. $\rho(r)\ne \rho(r')\Rightarrow F_r\ne F_{r'}$). 
\end{proposition}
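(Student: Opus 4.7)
The plan is to reduce the claim to the characterization of span surjectivity recalled just before the proposition: for a map $f:V\to W$ between finite dimensional spaces, $f$ is span surjective if and only if the coordinate functions of $f$ are linearly independent. Applied to $\psi_K:\Omega\to \mathbb{R}^\mathscr{C}$, factor span surjectivity is equivalent to the linear independence of the family $\{(\psi_K)_c\}_{c\in\mathscr{C}}$ viewed as real-valued functions on $\Omega$, so the task becomes translating the combinatorial condition on $F$ into this linear independence statement.

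First I would unpack the coordinate functions using the PL-RDK hypothesis. Because PL-RDK means that any two reactions sharing a reactant complex have identical rows of the kinetic order matrix $F$, there is a well-defined row $F_{\rho(r)}$ attached to each reactant complex, and the definition of $\psi_K$ gives $(\psi_K)_{\rho(r)}(x)=x^{F_r}$ on reactant-complex coordinates, and the constant function $1$ on the remaining coordinates. Thus the family to be tested for linear independence consists of the monomials $\{x^{F_r}:\rho(r)\text{ a reactant complex}\}$ together with the constant function.

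I would then handle the two directions separately. For necessity I argue by contraposition: if $\rho(r)\ne \rho(r')$ but $F_r=F_{r'}$, then $(\psi_K)_{\rho(r)}$ and $(\psi_K)_{\rho(r')}$ are the \emph{same} monomial $x^{F_r}$, hence the coordinate family is linearly dependent and $\psi_K$ fails to be span surjective. For sufficiency I invoke the standard fact that monomials $x^{v_1},\dots,x^{v_q}$ with pairwise distinct exponent vectors $v_i\in \mathbb{R}^\mathscr{S}$ are linearly independent on $\mathbb{R}_>^\mathscr{S}$ (provable either by evaluating at suitably chosen points or by a Vandermonde argument after taking componentwise logarithms), applied to the now-distinct $F_{\rho(r)}$.

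The main obstacle I anticipate is not the algebra but the bookkeeping for the coordinates attached to non-reactant complexes, all of which carry the constant value $1$ and would a priori spoil linear independence. I expect this to be handled by the same convention used in \cite{csl:1}, namely that the relevant span-surjectivity statement for $\psi_K$ refers to the subspace of $\mathbb{R}^\mathscr{C}$ indexed by the distinct reactant complexes (equivalently, to the image factorization through $\mathbb{R}^{\rho(\mathscr{R})}$), in which case the argument above is complete. Once this convention is made explicit, the distinct-monomials-are-independent fact closes both directions and yields the proposition.
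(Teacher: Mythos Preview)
The paper does not actually supply a proof of this proposition; it is quoted from \cite{csl:1} with the sentence ``We characterized in \cite{csl:1} a factor span surjective PL-RDK system,'' and no argument is given here. So there is no in-paper proof to compare against.

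That said, your strategy is exactly the natural one and matches how such results are typically obtained: invoke the characterization of span surjectivity via linear independence of coordinate functions, identify the reactant-complex coordinates of $\psi_K$ as the monomials $x^{F_r}$, and then use (i) that two identical monomials kill independence for the ``only if'' direction, and (ii) that distinct generalized monomials on $\mathbb{R}_>^{\mathscr{S}}$ are linearly independent for the ``if'' direction. Both steps are correct as stated; the linear independence of distinct power-law monomials can indeed be shown by the logarithmic/Vandermonde trick you mention, and in fact the paper itself uses exactly this evaluation-at-$e_i$ argument later in Example~1 when it proves the analogous implication $x^{l(r)}=x^{l(r')}\Rightarrow l(r)=l(r')$.

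The only genuine soft spot is the one you already flagged: the coordinates of $\psi_K$ indexed by non-reactant complexes are all equal to the constant function $1$, so if one reads Definition~17 literally with target $\mathbb{R}^{\mathscr{C}}$, any network with two or more terminal points would fail factor span surjectivity regardless of $F$. Your instinct is right that this is resolved by the convention in \cite{csl:1} of working over $\mathbb{R}^{\rho(\mathscr{R})}$ (equivalently, the $T$-matrix truncation to reactant complexes described immediately after the proposition). With that convention made explicit your argument is complete; without it the sufficiency direction would not go through, so it is worth stating the convention rather than leaving it implicit.
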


We recall the definition of the $m \times n$ matrix $\tilde{Y}$ from \cite{csl:10}: for a reactant complex, the column of $\tilde{Y}$ is the transpose of the kinetic order matrix row of the complex' reaction, otherwise (i.e. for a terminal point), the column is 0.\\

The $\bf{T}-$\textbf{matrix} of a PL-RDK system is formed by truncating away the columns of the terminal points in $\tilde{Y}$, obtaining an $m \times n_r$ matrix. The corresponding linear map $T : \mathbb{R}^{\rho(\mathscr{R})} \rightarrow \mathbb{R}^{\mathscr{R}}$ maps $\omega_\rho( r )$ to $(F_r )^T$ . The subspace $\tilde{R} : = \text{ Im }T =  \big \langle (F_r )^T \big \rangle$   is called the kinetic reactant subspace and $\tilde{q} = \dim \tilde{R}$ is called the kinetic reactant rank of the system.\\

Let $e^1,e^2,...,e^\ell \in \left\{ 0,1\right\}^{n}$ be the characteristic vectors of the sets $\mathscr{C}^{1}$,$\mathscr{C}^{2}$,...,$\mathscr{C}^{\ell}$, respectively, where $\mathscr{C}^{i}$ is the set of complexes in linkage class $\mathscr{L}^{i}$. That is, for all $j \in \mathscr{C}$ and  $i = 1,\dots,\ell$, we have $e^i_j = 1$ if $j \in \mathscr{C}^i$, and 0 otherwise. Let $L = \left[ e^1,e^2,...,e^\ell \right]$. Define the $\bf{\hat{T}}-$\textbf{matrix}, an $(m+\ell)\times n_{r}$ block matrix,  by $$\hat{T}=\left[ 
\begin{array}[center]{c} T \\
L_{pr}^{\top} \\
\end{array} \right],$$ 
where $L_{pr}$ is the truncated matrix $L$ (i.e., non-reactant rows are left out).\\

If the non-inflow columns (i.e., columns of the complexes associated to non-inflow reactions) of ${T}-${matrix} corresponding to each linkage class are linearly independent and if its column rank is maximal, then the chemical kinetics is said to be $\bf{\hat{T}}-$\textbf{rank maximal} (to type \textbf{PL-TIK}).

\section{CF Transformation of NF Kinetics}

The CF-RM (Complex Factorization by Reactant Multiples) method developed from a proposal by C. Pantea in December 2017 for such a transformation for power law kinetics. The key idea is, at  an NF branching point, i.e. a complex which is the reactant of reactions (called its branching reactions) with non-proportional interaction maps, to transform reactions by introducing new reactants while conserving the reaction vectors, thus leaving the stoichiometric subspace invariant. CF-RM refines the approach by ensuring that the reactant subspace also remains invariant and that a minimum number of reactions is transformed.  The essential underlying concept of CF-RM is that of CF-subsets of the set of reactions of a reactant complex.  The concept is also the basis for the construction of CF-decompositions of a RID kinetic system. 

\subsection{CF-subsets of the reaction set of a reactant complex}

For a reactant complex $y$ of a network $\mathscr{N}$, $\mathscr{R}(y)$ denotes its set of (branching) reactions, i.e., $\rho^{-1}(y)$ where $\rho:\mathscr{R} \rightarrow \mathscr{C}$ is the reactant map.  The $n_r$ reaction sets $\mathscr{R}(y)$ of reactant complexes partition the set of reactions $\mathscr{R}$ and hence induce a decomposition of $\mathscr{N}$. 

\begin{definition}  
Two reactions $r$, $r' \in \mathscr{R}(y)$ are \textbf{CF-equivalent for K} if their interaction functions coincide, i.e., $I_{K,r} = I_{K,r'}$ or, equivalently, if their kinetic functions $K_r$ and $K_r'$ are proportional (by a positive constant). The equivalence classes are the \textbf{CF-subsets} (for $K$) of the reactant complex $y$. 
\end{definition}

\begin{definition}
If $N_R(y)$ is the number of CF-subsets of $y$, then $\displaystyle{1 \le N_R(y) \le \mid \rho^{-1}(y)\mid}$. The reactant complex is a \textbf{CF-node} if $N_R(y)= 1$, and an NF-node otherwise. It is a \textbf{maximally NF-node} if $\displaystyle{N_R(y) = \mid\rho^{-1}(y)\mid> 1}$.
\end{definition}

\begin{definition}
The number \textbf{N$_\textbf{R}$}  of CF subsets of a CRN is the sum of $N_R(y)$ over all reactant complexes. 
\end{definition}

Clearly, $N_R  \ge n_r$ and the kinetics $K$ is CF if and only if $N_R = n_r$, or equivalently all reactant complexes are CF-nodes for $K$.

\begin{example}
 For a power law kinetic system, the CF-subsets of a reactant complex are the subsets of branching reactions with identical rows in the kinetic order matrix. To show this, we recall that the interaction map of a PLK system is $= x^F$ and hence the claim is $x^{l(r)} = x^{l(r')} \Rightarrow l(r)  = l(r')$  . The ``$\leq$" is evident, for the converse, let $e_i$ be the positive vector with e (the exponential number) as its ith coordinate and 1's otherwise. Since  $\log x^{l(r)} =  l(r)\log x$, the value of the $\log$ at $e_i =$ the ith kinetic order, which proves the claim. 
\end{example}

\begin{example} (Running Example - Part 1) In \cite{FMRL2019}, a power law kinetic system for R. Schmitz's pre-industrial carbon cycle model was introduced. The system (depicted in Figure~\ref{fig:smpriga}) with 6 complexes (representing carbon pools) and 13 reactions (indicating mass transfer) is weakly reversible and has zero deficiency.
\begin{figure}[H]
    \centering
    \includegraphics[width=0.35\textwidth]{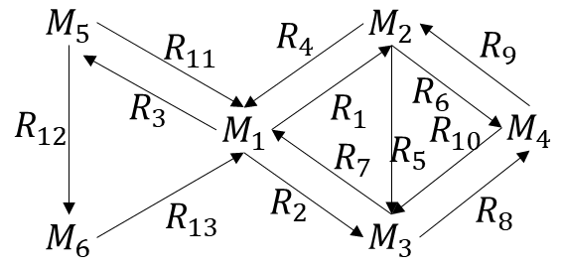}
\caption{CRN corresponding to the R. Schimtz's pre-industrial carbon cycle model \cite{FMRL2019}.}
\label{fig:smpriga}
\end{figure}

\noindent The system's kinetic order matrix is given by:                
$$M=\begin{blockarray}{lccccccl}
& M_1 & M2 & M3 & M4 & M5 & M6 \\
\begin{block}{c(cccccc)c}
  		R_1 & 1 & 0 & 0 & 0 & 0 & 0 &0.0931 \\
                  R_2 & 1 & 0 & 0 & 0 & 0 & 0 &0.0311 \\
                  R_3 & 0.36 & 0 & 0 & 0 & 0 & 0 &10.08896 \\
                  R_4 & 0 & 9.4 & 0 & 0 & 0 & 0 &0.7 \\
                  R_5 & 0 & 1 & 0 & 0 & 0 & 0 &0.0781 \\
                  R_6 & 0 & 1 & 0 & 0 & 0 & 0 &0.0164 \\
                  R_7 & 0 & 0 & 10.2 & 0 & 0 & 0 &0.2 \\
                  R_8 & 0 & 0 & 1 & 0 & 0 & 0 &0.714 \\
                  R_9 & 0 & 0 & 0 & 1 & 0 & 0 &0.0164 \\
                  R_{10} & 0 & 0 & 0 & 1 & 0 & 0 &0.00114 \\
                  R_{11} & 0 & 0 & 0 & 0 & 1 & 0 &0.0862 \\
                  R_{12} & 0 & 0 & 0 & 0 & 1 & 0 &0.0862 \\
                  R_{13} & 0 & 0 & 0 & 0 & 0 & 1 &0.0333 \\
\end{block}
\end{blockarray}.$$

\noindent The column to the right of F lists the rate constants of the corresponding reactions. The kinetic order matrix reveals that the system has 3 NF nodes (reactant complexes): $M_1$, $M_2$ and $M_3$. The following table lists their CF-subsets:
\begin{center}
  \begin{tabular}{ | c | c | c |}
    \hline
    NF node & Reaction set & CF-subsets \\ \hline
    $M_1$ & $\{R_1, R_2, R_3\}$ & $\{R_1, R_2\},\{R_3\}$ \\ \hline
    $M_2$ & $\{R_4, R_5\}$ & $\{R_4\},\{R_5\}$ \\ \hline
    $M_3$ & $\{R_6, R_7\}$ & $\{R_6\},\{R_7\}$ \\ 
    \hline
  \end{tabular}
\end{center}

\noindent Hence, $\mathscr{R}$ is partitioned into 9 CF-subsets, i.e., $N_R = 9$.
 
\end{example}

We also note that since the CF-subsets of a reactant complex partition its reaction set, and the reaction sets of reactant complexes partition the set of reactions, that the CF-subsets determine a decomposition.

We recall from \cite{JOSH2013} that a subset $\mathscr{R}'$ of $\mathscr{R}$ defines a subnetwork  $\mathscr{N}'= (\mathscr{S}', \mathscr{C}', \mathscr{R}')$ with $\mathscr{C}'$ consisting of the complexes occurring in reaction of $\mathscr{R}'$ and $\mathscr{S}'$ consisting of the species occurring in complexes in $\mathscr{C}'$. A CRN decomposition $\mathscr{N} = \mathscr{N}_1 \cup ... \cup \mathscr{N}_k$  consists of the subnetworks $\{ \mathscr{N}_i \}$ induced by a partition $\{ \mathscr{R}_i \}$ of $\mathscr{R}$. We use the model presented in \cite{FMRL2018} to illustrate the concepts introduced above.

\begin{definition}
The CF-subsets of a RID kinetic system partition the reaction set and induce the \textbf{CFS decomposition} of the system.
\end{definition}

The CFS decomposition consists of $N_R$ subnetworks, whereby $n_r \le N_R \le r$.

\subsection{CFM decompositions of a RID kinetic system }
In this Section, we introduce useful coarsenings of the CFS-decomposition of a RID kinetic system.\\

For each NF node $y$, we choose an ordering of its CF-subsets $\mathscr{R}_1(y)$, $\mathscr{R}_2 (y)$,..., $\mathscr{R}_{NR}(y)$ according to decreasing number of reactions in the CF-subset.  We define $\mathscr{R}_i := \cup_{y\in \rho(R)} \mathscr{R}_i(y)$ where $i = 1,...,\max_{y\in \rho(R)}  N_R(y)$  and $\mathscr{R}_i(y') = \phi$ if $N_R(y') < i$. \\

We can now introduce the concept of a maximal CF-subsystem (CFM) of a RID kinetic system:

\begin{definition}
A \textbf{maximal CF-subsystem} $(\mathscr{N}_{mcf}, {K})$ of a RID kinetic system $(\mathscr{N}, {K})$ is induced the union of the reaction sets of all CF-nodes and a CF-subset with the maximal number of reactions from each NF-node, i.e., the union $\mathscr{R}_{mcf}$ of $\{ \rho^{-1}(y)\mid y\ is \ CF-node\}$ and $\mathscr{R}_1$.
\end{definition}

Clearly, there may be several maximal CF-subsystems in a RID kinetic system, but the number of reactions in each of them is the same, and we denote this with $r_{mcf}$.  Note that since $\mid \mathscr{R}_i\mid \ge \mid \mathscr{R}_j\mid$ if $i < j$, then $r_{mcf}\ge \mid \mathscr{R}_i\mid$ for all $i$.
 
\begin{definition}
A \textbf{CFM decomposition} is induced by the reaction set partition $\{\mathscr{R}_{mcf}, \mathscr{R}_2,..., \mathscr{R}_k\}$, with $k = \max_{y\in \rho(R)}  N_R(y)$. 
\end{definition}

A CFM-decomposition is clearly a coarsening of the CFS-decomposition. It is the decomposition into CF-subsystems with the least number of subnetworks. 
\subsection{CF-RM Transformation of an NF kinetic system: the generic case}
We first introduce the concept of a CF-transformation of an NF kinetic system:

\begin{definition}
 A CF kinetic system $(\mathscr{N}^*, K^*)$ is a \textbf{CF-transform} of an NF system $(\mathscr{N}, {K})$, where $\mathscr{N} = (\mathscr{S}, \mathscr{C}, \mathscr{R})$, $\mathscr{N}^{*} = (\mathscr{S}^{*}, \mathscr{C}^{*}, \mathscr{R}^{*})$ and $N$, $N^{*}$ as their respective stoichiometric matrices, if and only if  $\mathscr{S}^{*} = \mathscr{S}$, $N^{*} = N$, and $K^{*} = K$. 
\end{definition}

$N^{*}K^{*} = NK$ implies that a CF-transform is dynamically equivalent to the original NF system. Moreover, the stoichiometric subspaces coincide, i.e., $S^* = S$.

Our first main result is the following Theorem:

\begin{theorem}\label{thm:1}
Any NF system $(\mathscr{N}, {K})$ is dynamically equivalent to a CF system $(\mathscr{N}^{*} , {K}^{*})$ via a CF-transformation.
\end{theorem}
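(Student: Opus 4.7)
The plan is to construct the CF-transform $(\mathscr{N}^{*}, K^{*})$ by a purely local reactant-relabelling at each NF node, keeping the kinetic functions literally unchanged and keeping both the stoichiometric and the reactant subspaces invariant. At each NF node $y$, the CF-subsets $\mathscr{R}_1(y), \ldots, \mathscr{R}_{N_R(y)}(y)$ already partition the branching reactions into groups with coinciding interaction maps; the only obstruction to $K$ being complex factorizable is that all these groups share the same reactant $y$. Thus, if I replace $y$ by several distinct surrogate reactants, one per CF-subset, while compensating the products so that each reaction vector is preserved, the resulting network will be CF and its rate functions will act pointwise exactly as the original ones; this is the idea underlying the ``Reactant Multiples'' in CF-RM.

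Concretely, I would leave every CF-node and the distinguished (largest) CF-subset $\mathscr{R}_1(y)$ at each NF node $y$ untouched. For each NF node $y$ and each $i \geq 2$ I choose a positive integer $\alpha_i(y) \geq 2$ such that $\alpha_i(y)\,y \notin \mathscr{C}$ and such that all the new multiples $\{\alpha_i(y)\,y : y \text{ NF},\ i \geq 2\}$ are pairwise distinct; since $\mathscr{C}$ is finite and only finitely many multiples are required, such a choice always exists. For each reaction $r\colon y \to y'$ in $\mathscr{R}_i(y)$ with $i \geq 2$ I replace it by the surrogate reaction $r^{*}\colon \alpha_i(y)\,y \to (\alpha_i(y)-1)\,y + y'$ and declare $K^{*}_{r^{*}} := K_r$. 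The remaining reactions are copied verbatim, and $\mathscr{C}^{*}$, $\mathscr{R}^{*}$ are defined accordingly; one sees immediately that $\mathscr{S}^{*} = \mathscr{S}$, since the new products only reuse species already present in $y$ or $y'$.

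For the verification, the reaction vector of $r^{*}$ equals $((\alpha_i(y)-1)y + y') - \alpha_i(y)y = y'-y$, so $N^{*} = N$, and since $K^{*} = K$ pointwise we obtain $N^{*}K^{*} = NK$, that is, dynamical equivalence in the sense of the remark following the definition of linear conjugacy. The new reactants are integer multiples of original reactant complexes, so the reactant subspace is preserved; because $y' \neq y$ in any CRN and $\alpha_i(y) \geq 1$, the new products lie in $\mathbb{Z}^{\mathscr{S}}_{\geq}$ and no self-loops arise. By construction the branching reactions at every reactant of $\mathscr{N}^{*}$ form a single CF-subset of the original system, so $(\mathscr{N}^{*}, K^{*})$ is CF. I expect the main obstacle to be the combinatorial bookkeeping in the choice of the multipliers $\alpha_i(y)$, namely ensuring that no surrogate reactant collides with an existing complex of $\mathscr{C}$ or with another surrogate (which would spuriously re-merge distinct CF-subsets across the network), together with the corner case $y = 0$; the latter is automatic, since the positivity condition $\supp y \subset \supp c$ forces the interaction map at the zero reactant to be constant, so the zero complex is already a CF-node and never requires transformation.
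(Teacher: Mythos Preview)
Your construction is essentially the paper's argument, specialized to the reactant-multiples variant that the paper goes on to name CF-RM. The paper's own proof adds an \emph{arbitrary} catalytic complex $y_i$ to both sides of every reaction in $\mathscr{R}_i(y)$ for $i\ge 2$ (so $y\to z$ becomes $y+y_i\to z+y_i$), requiring only that $y+y_i$ be a fresh reactant; setting $y_i=(\alpha_i(y)-1)\,y$ recovers your scheme and buys the additional invariant $R^{*}=R$ that the paper isolates immediately after the theorem. In that sense your route is not different, just more specific.

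There is, however, a genuine gap in your handling of the corner case $y=0$. The positivity condition says only that $K_j(c)>0$ for all $c\in\Omega_K$ when $\supp y=\emptyset$; it does \emph{not} force the interaction map to be constant. For instance, in PLK on $\Omega_K=\mathbb{R}^{\mathscr{S}}_{>}$ the inflow reactions $0\to X_1$ with rate $k\,c_2^{1/2}$ and $0\to X_3$ with rate $k'\,c_2$ are both valid chemical kinetics yet have non-proportional interaction maps, so the zero complex is an NF node with two CF-subsets. Your multiples device then collapses, since $\alpha\cdot 0=0$ for every $\alpha$ and no distinct surrogate reactants can be produced. The paper's proof is immune precisely because the catalytic complex $y_i$ is arbitrary (for $y=0$ any $y_i\notin\rho(\mathscr{R})$ will do). To repair your argument you must either revert to the general catalytic addition at the zero complex, or restrict to a kinetics class in which inflow interaction maps are genuinely constant and say so.
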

\begin{proof} We construct the CF-transformation nodewise, i.e., we specify how to transform each NF-node $y$ into $N_R(y)$ CF-nodes. Let $\mathscr{R}_1(y),..., \mathscr{R}_k(y)$ (where $k = N_R(y)$) be the CF subsets of $y$. We leave $\mathscr{R}_1(y)$ unchanged. We choose a complex $y_2$ such that $y + y_2$ is not contained in $\rho(\mathscr{R})$. All reactions in $\mathscr{R}_2(y)$ are transformed ``catalytically", i.e., $r_i: y \rightarrow z_i$ is replaced by $r_i'´: y + y_2 \rightarrow z_i + y_2$. The reaction vector is unchanged. For the reactions in $\mathscr{R}_3(y)$, choose a complex $y_3$ such that $y + y_3$ is not in $\rho(\mathscr{R}) \cup \{ y + y_2\}$ and proceed as in $\mathscr{R}_2(y)$.  After $N_R(y) -1$ steps, we have completed the transformation for $y$. After the transformation of all NF nodes, we have a CF-transform as claimed.
\end{proof}

There is clearly a multitude of ways to carry out CF-transformations, and a good principle is to minimize the changes needed as well as keep further network components invariant under the necessary changes.  In this spirit, the specific goals of the CF-RM method are:
\begin{itemize}
\item minimize the number of reactions to be changed and
\item leave the reactant subspace invariant, i.e., $R^{*} = R$.
\end{itemize}
 
The first goal is achieved by choosing, for each NF node, a CF subset with the maximal number of reactions, as the subset to be left unchanged. The second goal is accomplished by selecting the ``catalytic" complexes used as multiples of the reactant complex (as expressed in the acronym CF-RM).

The CF-RM method proceeds as follows:
\begin{itemize}
\item Determine the reactant set $\rho(\mathscr{R})$ (see Algorithm 1 lines 1-4).
\item A CF-node is left unchanged (see Algorithm 1 lines 5-21).
\item At an NF-node, select a CF-subset with the maximal number of reactions. Note that there may be several. This CF-subset is left unchanged (this step minimizes the number of t-reactions overall and may see Algorithm 1 lines 22-29).  
\item For each of the remaining $N_R(y) - 1$ CF-subets, choose successively a multiple of $y$ which is not among the current set of reactants, i.e., those of the original networks left unchanged and the already selected new reactants. Various procedures are possible for this selection of a new reactant; the essential condition is that it is different from those in the current reactant set. After each choice, the current set must be updated.
For each Non-reactant Determined Kinetics (NDK) reactant complex $y$,  $N_R(y) - 1$ new reactants are constructed (see Algorithm 1 lines 30-37).
\item Since the last expression is also true for a CF-node
, the total number of new reactants $= \sum (N_R(y) - 1)$ with the sum taken over all reactants. This number $= \sum N_R(y) - \sum 1 = \sum N_R(y) - n_r = N_R - n_r$.  Under CF-RM the number of CF-subsets NR of the original system is also the number of reactants of the transformed system, since the latter is equal to $n_r +  N_R - n_r = N_R$.    
\end{itemize}

\begin{algorithm}
\caption{CF-RM for RIP-NFK}\label{euclid}
\begin{algorithmic}[1]
\Procedure{INITIAL}{}
\State INPUT1: reaction set with its kinetic values
\State OUTPUT1: reactant set, denote this by $\rho(\mathcal{R})$
\State OUTPUT2: matrix $\rho'$ for the reactant map of the network (from OUTPUT1)
\EndProcedure
\Procedure{Identification of Branching Complexes}{}
\State INPUT2: column sum of $\rho'$ (from OUTPUT2)
\State OUTPUT3: identify the branching complexes
\If {$|\rho'(y)| > 1$} 
\State \Return complex $y$ is a branching reactant complex
\Else 
	\If {$|\rho'(y)| = 1$} 
	\State \Return complex $y$ is a non-branching reactant complex
	\Else 
		\If {$|\rho'(y)| < 1$} \Return false
		\EndIf
	\EndIf
\EndIf
\EndProcedure
\Procedure{Identification of RDK and NDK Complexes}{}
\State INPUT3: kinetic order of the identified branching complex (from OUTPUT3)
\State OUTPUT4: determine whether the branching complex is RDK or NDK
\If {all kinetic order associated to the identified branching complex are all equal} 
\State \Return {complex is an RDK}
\Else 
\State \Return {complex is an NDK}
\EndIf
\EndProcedure
\Procedure{Generate RDK subsets for every NDK complexes}{}
\State INPUT4: for every NDK node $z$ (from OUTPUT4)
\State Let $N_R$ be the number of distinct kinetic order representation for each $z$. 
\State OUTPUT5: identify $N_R$ for each $z$
\State OUTPUT6: generate the reaction set $\rho^{-1} (z )$ for each $z$
\State OUTPUT7: generate the RDK subsets, $\mathcal{R}_b$, (input from OUTPUT5-6) \\ \hspace{.3in} where
$\mathcal{R}_b(z) = \{r \in \rho^{-1}(z) | \imath(r) = b\}$ and \\ \hspace{.3in} $b$ is a distinct kinetic order value in the NDK node $z$.
\EndProcedure
\Procedure{CF Transformation}{}
\State OUTPUT8: Take $\max \{|\mathcal{R}_b(z )|\}$ (from OUTPUT7) 
\State note: the reactions of this RDK-subset is left unchanged.
\For{$c=1 \text{ \textbf{to} } (N_R - 1)$ } 
\State {check the reactant $a$ in $\rho(\mathcal{R})$} 
\State Let $m_a$ be the coefficient of $a$ in $\rho(\mathcal{R})$. 
\State OUTPUT9: transform the reactions in $\mathcal{R}_b$ as such that the new reactant is $a + m_aa = (m_a + 1)a$
\State OUTPUT10: Update $\rho(\mathcal{R})$ (from OUTPUT9)
\EndFor
\EndProcedure
\State \textbf{REPEAT} \textit{Procedure CF Transformation} (for the remaining distinct kinetic order values)
\State  \textbf{REPEAT} \textit{Procedure Generate RDK subsets for every NDK complexes} (for the remaining NDK node)
\end{algorithmic}
\end{algorithm}

\begin{remark}
If an NF system has at least one NF-node with more than 1 CF-subset with the maximal number of reactions, then several transforms can be generated, which might have some differing network properties. It is possible to define an additional procedure for which CF-subset to choose and leave unchanged.
\end{remark}

\begin{remark}
As mentioned above, various procedures can be defined to select a new reactant. One possible procedure is the following: 
\begin{itemize}
\item Determine the set of multiples of $y$ among the current reactants.
\item If the set is empty, set $m_y = 1$.
\item If the set is non-empty, determine the maximum multiple $y'=max_y y$. Set $m_y = max_y$.
\item The new reactant is $y + m_y y$.
\end{itemize}
\end{remark}

Instead of repeating the reactant set check for every CF-subset of $y$, one could further optimize by ordering the CF-subsets to be changed, doing the above for the first, and then use $y + (m_y + i - 1)y$ for the $i = 2,..., N_R-1$.

Table \ref{t1} presents the key network numbers of a CF-RM transform in equations or inequalities involving only network numbers of the original NF network.  Thus, the relationships are of predictive character. 
\begin{table}
\caption{Key network number of a CF-RM transform.}
\center
\begin{tabular}{p{6cm} p{6cm}}
\hline
Network number& Value/bounds \\ 
\hline
Number of species & $m^{*} = m$  \\ 
\hline
Number of complexes &? \\ 
\hline
Number of reactant complexes & $n_r \le n_r^* = N_R$
($N_R:= \sum \mid \iota(\rho^{-1}(y))\mid$ = total number of RDK subsets) \\ 
\hline
Number of CF-subsets & $N_R^{*} = N_R$  \\ 
\hline
 Number of reactions & $r^{*} = r$  \\ 
\hline
Number of linkage classes & $1 \le l^{*} - l^{*}_b \le (N_R - n_r) + l$
($l^{*}_b :=$ number of new linkage classes from link-breaking) \\ 
\hline
Number of terminal strong linkage classes & ? \\ 
\hline
Rank of network & $s^{*} = s $  \\ 
\hline
Reactant rank of network & $q^{*} = q $  \\ 
\hline
Deficiency of network & ?  \\ 
\hline
Reactant deficiency of network & $\delta_\rho^{*} = \delta_\rho + (N_R -�� n_r)$ \\ 
\hline
\end{tabular}
\label{t1}
\end{table}

\begin{remark}
The addition of complexes to both sides of a reaction is similar to the technique used by M. Johnston for translating mass action systems to generalized mass action systems in \cite{JOHN2014}.
\end{remark}

\noindent In the next proposition, we provide a proof of a Table \ref{t1} entry which is not straightforward.

\begin{proposition}
\begin{itemize}
\item[i)] $l^{*} = l^{*}_r + l^{*}_b + l$, where $l^{*}_r =$ number of new linkage classes generated by new reactants  and $l^{*}_b =$ number of new linkage classes due to link-breaking. 
\item[ii)] $l^{*} - l^{*}_b \le (N_R - n_r) + l$.   
\end{itemize}
\end{proposition}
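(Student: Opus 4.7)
The plan is to decompose the linkage classes of the transformed network $\mathscr{N}^{*}$ into two disjoint groups — those that contain at least one newly created reactant complex, and those consisting entirely of original complexes — and then read off both claims from this decomposition.

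First I would analyze the structural effect of CF-RM. The procedure replaces every reaction $y \to z$ in a non-maximal CF-subset $\mathscr{R}_j(y)$ (with $j \ge 2$) by the catalytically transformed reaction $y + m_y y \to z + m_y y$, and leaves every other reaction unchanged. Each non-maximal CF-subset contributes exactly one new reactant $y + m_y y$ to $\mathscr{C}^{*}$ together with its associated new product complexes $z + m_y y$. By the CF-RM selection rule, each new reactant is distinct from every original reactant and from the already-chosen new reactants, so the total count of new reactants is $\sum_{y \in \rho(\mathscr{R})} (N_R(y) - 1) = N_R - n_r$.

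For part (i), I would set $l^{*}_r$ equal to the number of linkage classes of $\mathscr{N}^{*}$ that contain at least one new reactant. Each new product $z + m_y y$ is joined to its new reactant $y + m_y y$ by the transformed reaction that produced it, so every linkage class meeting a new complex is already counted in $l^{*}_r$. The remaining linkage classes of $\mathscr{N}^{*}$ consist entirely of original complexes; restricting the digraph of $\mathscr{N}^{*}$ to $\mathscr{C}$ yields exactly the digraph of $\mathscr{N}$ with all non-maximal-CF-subset reactions deleted. Since arc deletion in a digraph can only split weakly connected components and never merge them, the number of such purely-original linkage classes is $l + l^{*}_b$, where $l^{*}_b \ge 0$ records the splits due to link-breaking. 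Summing, $l^{*} = (l + l^{*}_b) + l^{*}_r$, which is (i). Part (ii) then follows immediately: since each of the $N_R - n_r$ new reactants sits in exactly one linkage class of $\mathscr{N}^{*}$, we have $l^{*}_r \le N_R - n_r$, so $l^{*} - l^{*}_b = l + l^{*}_r \le l + (N_R - n_r)$.

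The main obstacle is to verify that the proposed partition of linkage classes is exhaustive and disjoint — in particular, that no new reactant ever lies in the same linkage class as an original complex. This amounts to checking that the head $z + m_y y$ of each transformed reaction is a genuinely new complex (not one already present in $\mathscr{C}$) and that it is not incident to any original reaction; both facts follow from the construction, since $z + m_y y$ is obtained by adding a multiple of a reactant not present in the current reactant set, and no original reaction is introduced that involves $z + m_y y$. A secondary subtlety is that new reactants arising from different CF-subsets could in principle share products and thereby merge two would-be new linkage classes; this is precisely the slack that turns the equality $l^{*}_r = N_R - n_r$ into the inequality $l^{*}_r \le N_R - n_r$, and is exactly what produces the ``$\le$'' rather than ``$=$'' in part (ii).
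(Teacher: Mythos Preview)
Your overall strategy matches the paper's: split $l^{*}-l$ into a link-breaking part and a new-reactant part, then bound the latter by the count $N_R - n_r$ of new reactants. The paper's own proof is extremely terse (it treats (i) as essentially definitional and for (ii) just observes that each new reactant adds at most one linkage class), so your more detailed decomposition is a reasonable elaboration.

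However, your ``main obstacle'' paragraph contains a genuine error. You assert that every new product $z + m_y y$ is a genuinely new complex and that no new reactant ever shares a linkage class with an original complex, and you claim this ``follows from the construction''. It does not. The generic CF-RM procedure (Theorem~\ref{thm:1} and the surrounding discussion) only requires that the new \emph{reactant} $y + m_y y$ avoid the current \emph{reactant} set $\rho(\mathscr{R})$; nothing prevents $y + m_y y$ from coinciding with an old \emph{product} complex, and nothing at all constrains the new products $z + m_y y$. The paper's own proof of (ii) explicitly names exactly this phenomenon as the reason for the inequality: a new reactant contributes no new linkage class ``if it coincides with an old product complex or at least one of the new product complexes in its linkage class coincides with an old complex''. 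The guarantee you are invoking is a feature of the stricter variant CF-RM$_+$ introduced only in the next subsection, not of generic CF-RM.

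This matters for your argument in two places. First, your justification of the partition in (i) breaks: if a new reactant (or one of its new products) coincides with an old complex, then some original complexes land in your category-1 linkage classes, so the category-2 subgraph is no longer all of $\mathscr{N}$ with arcs deleted, and your identification of its component count with $l + l^{*}_b$ is not justified as stated. Second, your explanation of the slack in (ii) is incomplete: you attribute the inequality only to distinct new reactants possibly sharing products, but the dominant mechanism---coincidence of new complexes with old ones---is the one the paper highlights. The bound $l^{*}_r \le N_R - n_r$ itself survives (each new reactant still lies in a unique linkage class), but the surrounding justification needs to be rewritten to allow for these coincidences rather than to rule them out.
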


\begin{proof} For $i)$, the equation expresses the partitioning into 3 subsets. For $ii)$, a new reactant adds at most 1 linkage class (none if it coincides with an old product complex or at least one of the new product complexes in its linkage class coincides with an old complex).
\end{proof}

The ``link-breaking" effect of CF-RM is shown in the CRN in Figure~\ref{f1}:  if  $R_{(i-1)}: X_1 \rightarrow X_i$ for $i=2,...,5$, $R_5: X_4 \rightarrow X_6$, $R_6: X_5 \rightarrow X_7$ and $X_1$ NF with CF-subsets $\{R_1,R_2\}$ and $\{R_5,R_6\}$, then $\delta^{*} = 10 - 4 - 6 = 0 = \delta$.

\begin{figure}[H]
    \centering
    \includegraphics[width=0.3\textwidth]{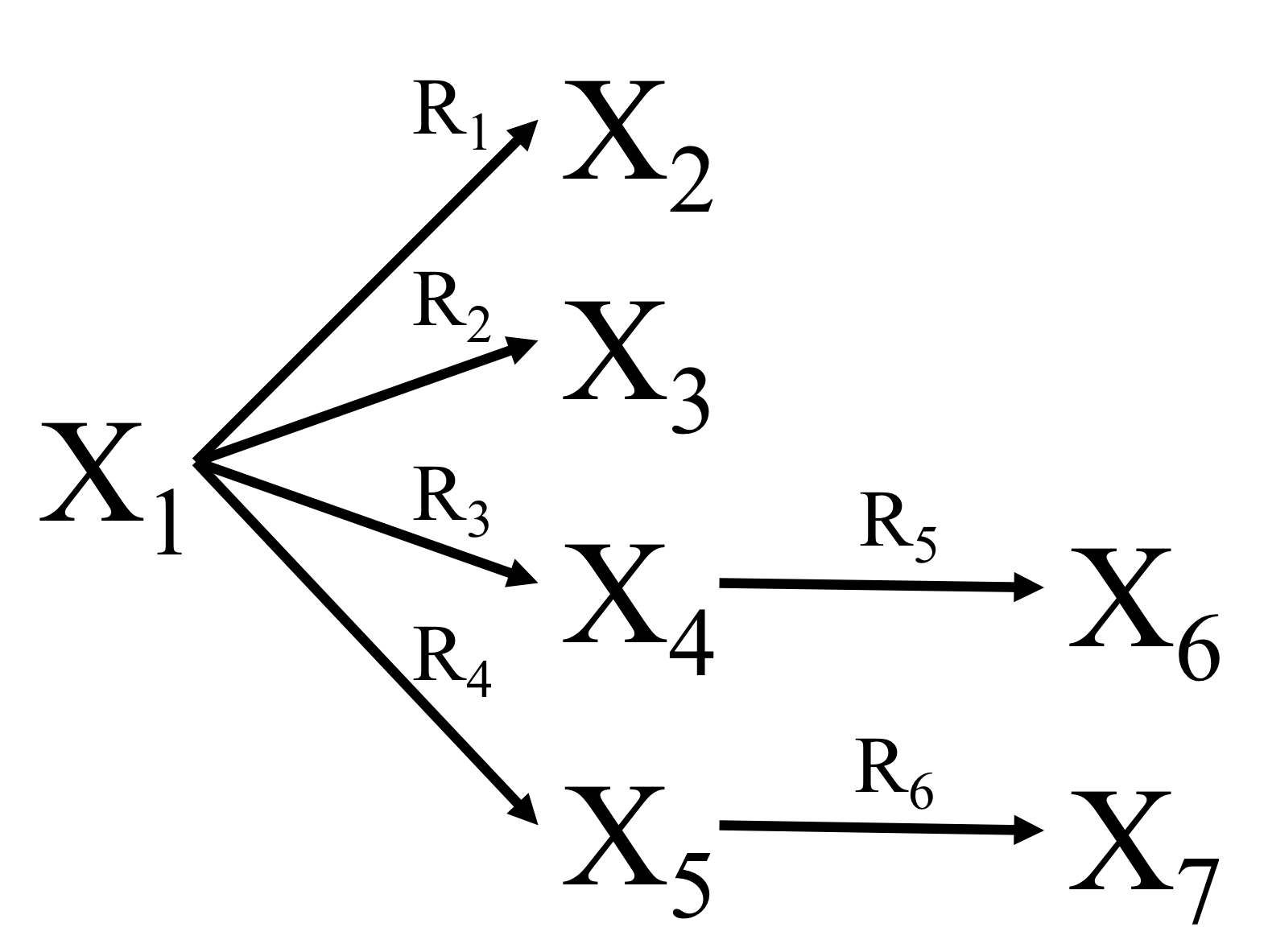}
\caption{The ``link-breaking" effect of CF-RM in the given CRN. }
\label{f1}
\end{figure}

One notes however that three key network numbers of $\mathscr{N}^{*}$  have question marks: the number of complexes $n^{*}$, the deficiency $\delta^{*}$ and the number of  terminal strong linkage classes  $t^*$. Indeed, for many networks, the deficiency increases under CF-RM, but, as the following Proposition shows, for certain network classes, it decreases. 

\begin{proposition}
Let $d$ be an integer $\ge 2$. Let $\mathscr{N}_d$ be the CRN with species $X_1$, $X_2$ and the following reactions:\\
$R_1: X_1 \rightarrow 2X_1$ \\ 
$R_i: X_1 \rightarrow 2iX_1 + X_2$ for $i = 2,...,d$\\
$R_{d+i -1}: X_1 \rightarrow (2i - 1)X_1 + X_2 \rightarrow X_1 + (2i-1)X_2: R_{2d+i -2}$ for $i = 2,...,d$\\
Let $X_1$ be an NF node with CF-subsets $\{R_1,\cdots, R_d\}$ and $\{R_{d+1},\cdots, R_{2d-1}\}$.\\
Then, $\delta - \delta^{*} = d - 1$. 
\end{proposition}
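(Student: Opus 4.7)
The plan is to invoke the deficiency formula $\delta = n - l - s$ for both $\mathscr{N}_d$ and its CF-RM transform $\mathscr{N}_d^*$ and show that $n$ and $s$ are preserved while $l$ jumps by exactly $d-1$. First I would enumerate the complexes of $\mathscr{N}_d$: $X_1$, $2X_1$, the $d-1$ complexes $2iX_1+X_2$ for $i=2,\ldots,d$, the $d-1$ complexes $(2i-1)X_1+X_2$ for $i=2,\ldots,d$, and the $d-1$ complexes $X_1+(2i-1)X_2$ for $i=2,\ldots,d$; these are pairwise distinct by a trivial parity/size inspection of their $X_1$- and $X_2$-coefficients, giving $n = 3d-1$. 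The reaction vectors of $R_1$ and $R_2$, namely $(1,0)^\top$ and $(3,1)^\top$, already span $\mathbb{R}^2$, so $s = 2$. Every complex is linked to $X_1$ either directly through some $R_i$ or $R_{d+i-1}$, or indirectly through $R_{d+i-1}$ followed by $R_{2d+i-2}$, so $l = 1$ and $\delta = 3d - 4$.

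Next I would carry out CF-RM on the NF-node $X_1$. The two CF-subsets have sizes $d$ and $d-1$, so $\{R_1,\ldots,R_d\}$ is the one left unchanged. For the batch $\{R_{d+1},\ldots,R_{2d-1}\}$ one must choose a new reactant that is a multiple of $X_1$ and lies outside the current reactant set $\{X_1\} \cup \{(2i-1)X_1+X_2 : i = 2,\ldots,d\}$; the procedure of the preceding remark sets $m_y = 1$ and produces $2X_1$. The transformed reactions are $\tilde{R}_{d+i-1}\colon 2X_1 \rightarrow 2iX_1 + X_2$ for $i=2,\ldots,d$, with reaction vectors preserved. The reactions $R_{2d+i-2}$ do not touch the NF-node and are unchanged.

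Then I would recompute the three network numbers of $\mathscr{N}_d^*$. Crucially, $2X_1$ already appeared in $\mathscr{N}_d$ as the product of $R_1$, so the transformation introduces no new complex and $n^* = 3d - 1$. Dynamic equivalence preserves the reaction vectors and hence the stoichiometric subspace, so $s^* = 2$. The linkage-class count does change: the old bridges $X_1 \to (2i-1)X_1+X_2$ previously supplied by $R_{d+i-1}$ are replaced by $2X_1 \to 2iX_1+X_2$, which lies entirely in the component of $X_1$, so each pair $\{(2i-1)X_1+X_2,\; X_1 + (2i-1)X_2\}$ is severed from the main component and forms its own linkage class. This yields $l^* = 1 + (d-1) = d$, whence $\delta^* = 2d - 3$ and $\delta - \delta^* = d - 1$. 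The main obstacle is the linkage-class bookkeeping in this last step: one has to verify that no residual reaction reconnects a severed pair to the main component and that the dual role of $2X_1$ (product of $R_1$, new reactant of the transformed batch) is handled correctly. Both checks are immediate from direct inspection of the reaction list.
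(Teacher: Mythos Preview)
Your proof is correct and follows essentially the same route as the paper's: identify the CF-RM transform explicitly (new reactant $2X_1$, new reactions $2X_1 \to 2iX_1+X_2$), observe that $2X_1$ and the new products $2iX_1+X_2$ are already existing complexes so $n^* = n$, note that the ``link-breaking'' severs each pair $\{(2i-1)X_1+X_2,\; X_1+(2i-1)X_2\}$ into its own linkage class giving $l^* = d$, and conclude via $\delta - \delta^* = (n-1-2)-(n-d-2)=d-1$. The paper's own argument is terser---it leaves $n$ symbolic and does not spell out the connectivity check---while you supply the explicit counts $n=3d-1$, $s=2$, $l=1$ and verify the severing carefully, but the underlying reasoning is identical.
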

 
\begin{proof} The new reactions are:
$2X_1 \rightarrow 2iX_1 + X_2$ for $i  = 2,..., d$. The remaining reactant complexes are all non-branching, thus RDK and unchanged. Hence there is no new complex, while there are $d -1$ new linkage classes due to the ``link-breaking" effect, i.e., 
$n^{*} = n, l^{*} = 1 + (d -1) = d \Rightarrow \delta^{*} = n - d - 2$. Therefore,  
$\delta - \delta^{*} = (n - 1 - 2) - (n - 2 - d) = d - 1$. 
\end{proof}

In the next section, we present a special variant of CF-RM where these network numbers can be better estimated.

\subsection{CF-RM$_\textbf{+}$: a ``choosier" CF-RM variant}
CF-RM$_+$ is a variant of CF-RM which uses additional criteria in the selection of the new reactant multiples. All other steps are identical with the generic CF-RM method, i.e., a CF-RM$_+$ transform is also a CF- transform.

CF-RM$_+$ chooses the reactant multiple so that 
\begin{itemize}
\item[a)] the new reactant differs from all existing complexes, and
\item[b)] all the new product complexes in the CF-subset also differ from all existing complexes.
\end{itemize}

There are of course various ways of ensuring that conditions a) and b) are fulfilled and we leave it to the first consequence of transforming via CF-RM$_+$, which is a more predictable change in deficiency.

\begin{proposition}
For a CF-RM$_+$ transform $\mathscr{N}^{*}$, $\delta^{*}\ge  \delta$. 
\end{proposition}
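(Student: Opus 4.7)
My approach uses the formula $\delta = n - l - s$. Since CF-RM transformations preserve reaction vectors and therefore the stoichiometric subspace, $s^{*} = s$ as recorded in Table~\ref{t1}. It thus suffices to prove $n^{*} - l^{*} \ge n - l$, which is really a statement about ranks of incidence matrices, since for any digraph the incidence-matrix rank equals the number of vertices minus the number of connected components.

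The key structural feature of CF-RM$_{+}$ that I would leverage is conditions (a) and (b) taken together: for every transformed CF-subset $\mathscr{R}_{i}(y)$ with $i \ge 2$, the new reactant $y + y_{j}$ and every new product complex $z + y_{j}$ differ from all existing complexes. Consequently the complex set $\mathscr{C}_{new}$ used by the transformed reactions is disjoint from the complex set $\mathscr{C}_{mcf}$ of complexes occurring in unchanged reactions, so $\mathscr{N}^{*}$ is a vertex-disjoint union of the subnetwork $\mathscr{N}_{mcf}$ of unchanged reactions and the subnetwork $\mathscr{N}_{new}$ of new reactions. In particular, the linkage classes of $\mathscr{N}^{*}$ partition into those living in $\mathscr{C}_{mcf}$ and those living in $\mathscr{C}_{new}$, giving $n^{*} - l^{*} = (n_{mcf} - l_{mcf}) + (n_{new} - l_{new})$.

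To finish, I would first observe that $\mathscr{N}_{new}$ is a disjoint union of $N_{R} - n_{r}$ star subnetworks, one per transformed CF-subset: a subset $\mathscr{R}_{i}(y)$ with $k$ reactions produces a star having the single new reactant $y + y_{j}$ and $k$ pairwise distinct new product complexes, hence $k+1$ complexes and one linkage class. Summing over stars gives $n_{new} - l_{new} = r - r_{mcf}$, where $r_{mcf}$ is the number of unchanged reactions. Next, write the original incidence matrix as $I_{a} = [\, I_{mcf} \mid I_{t}\,]$, partitioning the columns into unchanged and transformed reactions; the left block $I_{mcf}$ has rank $n_{mcf} - l_{mcf}$ (as the incidence matrix of $\mathscr{N}_{mcf}$, padded with zero rows for complexes not in $\mathscr{C}_{mcf}$) and the right block has $r - r_{mcf}$ columns. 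Subadditivity of rank then gives
\[
n - l \;=\; \mathrm{rank}\, I_{a} \;\le\; \mathrm{rank}\, I_{mcf} + \mathrm{rank}\, I_{t} \;\le\; (n_{mcf} - l_{mcf}) + (r - r_{mcf}) \;=\; n^{*} - l^{*},
\]
which is the required inequality, and so $\delta^{*} \ge \delta$.

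The one delicate point is controlling the pair $(n_{mcf}, l_{mcf})$ individually, since passing from $\mathscr{N}$ to $\mathscr{N}_{mcf}$ can simultaneously delete complexes (those appearing only as products of transformed reactions) and split existing linkage classes. The rank-subadditivity bound above sidesteps this entirely: one only ever needs the combination $n_{mcf} - l_{mcf} = \mathrm{rank}\, I_{mcf}$, which absorbs both the deletions and the splittings in a single stroke.
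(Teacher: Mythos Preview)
Your proof is correct, and it takes a genuinely different route from the paper's. The paper argues by direct enumeration: it establishes the exact count $n^{*}-n=(N_R-n_r)+(r-r_{mcf})$ from the CF-RM$_+$ conditions, then bounds $l^{*}-l=l^{*}_r+l^{*}_b$ from above by $(N_R-n_r)+(r-r_{mcf})$ using the separate estimates $l^{*}_r\le N_R-n_r$ (from Proposition~2) and $l^{*}_b\le r-r_{mcf}$, and subtracts. Your argument instead interprets $n-l$ as the rank of the incidence matrix and exploits the vertex-disjoint splitting $\mathscr{N}^{*}=\mathscr{N}_{mcf}\sqcup\mathscr{N}_{new}$ guaranteed by conditions (a) and (b); the inequality then drops out of rank subadditivity applied to the column partition $I_a=[I_{mcf}\mid I_t]$. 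What your approach buys is that it never touches the somewhat delicate quantities $l^{*}_r$ and $l^{*}_b$ individually, and it makes transparent why the bound is governed by $r-r_{mcf}$ (the number of columns in $I_t$). What the paper's approach buys is that along the way it produces the exact value of $n^{*}-n$, which is reused in the subsequent Corollary and in Proposition~\ref{Prop:tptc}.
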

\begin{proof}  For any CRN, $n = n_r + t_p$, where $t_p$ is the number of terminal points.  In an CF-RM$_+$ transform, in each subset to be changed, there is one new reactant complex and exactly $x$ new terminal points. The number of reactions to be changed in the CF-subset is also pertained by $x$. Since all terminal point of the original network are conserved (with no coincidence), we obtain $n^{*} - n = (N_R - n_r ) + (r - r_{mcf})$. On the other hand, $l^{*} - l = l^{*}_r + l^{*}_b$. For any CF-RM$_+$ transform, $l^{*}_b \le r - r_{mcf}$ (a link-break is created per new reactant--whether it leads to a new linkage class or not depends on specific network properties). This implies that $l^{*} - l \le (N_R - n_r ) +( r - r_{mcf})$. Hence, $\delta^{*}- \delta = (n^{*}- l^{*}) - (n - l) = (n^{*} - n) - (l^{*} - l) \ge (N_R - n_r) + (r - r_{mcf})- (N_R - n_r ) - (r - r_{mcf} )= 0$. 
\end{proof}

\begin{remark}
The monomolecular system from Figure~\ref{f1} shows this lower bound is sharp.
\end{remark}

Besides the change in deficiency, the change in the number of terminal strong linkage classes is difficult to predict under the generic CF-RM transformation.  Recall that $t$  has two components, i.e., $t = t_p + t_c$, which are the number of terminal points and the number of cycle terminal classes. Under CF-RM$_+$, the relationships for its components can be predicted and together provide an expression for the change in $t$ as shown in the following Proposition:

\begin{proposition}\label{Prop:tptc}
For a CF-RM$_+$ transform $\mathscr{N}^{*}$, we have:\\
\begin{itemize}
\item[i)] $t_p^{*} - t_p = r - r_{mcf}$
\item[ii)] $t_c^{*} - t_c \le  0$
\item[iii)] $t^{*} - t \le r - r_{mcf}$
\end{itemize}
\end{proposition}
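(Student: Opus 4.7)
The plan is to decompose the analysis of $t^*$ using the identity $t = t_p + t_c$ and handle the two components separately, leveraging the two CF-RM$_+$ conditions (a) and (b) to track exactly which complexes and reactions are created or destroyed. Every transformed reaction $r_i: y \to z_i$ becomes $r_i': y' \to z_i'$ with $y'$ a fresh reactant multiple and $z_i'$ a fresh product, while unchanged reactions are inherited verbatim from $\mathscr{N}$.

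For part (i), I would argue that each of the $r - r_{mcf}$ transformed reactions contributes exactly one new terminal point, namely $z_i'$. By condition (b), $z_i'$ differs from every existing complex, and because only $y$-multiples are used as new reactants (and these are chosen distinct from each $z_i'$ by condition (a) together with the stepwise updating of the current reactant set), $z_i'$ never appears as a reactant in $\mathscr{N}^*$. Hence $\{z_i'\}$ is a singleton strong linkage class with no outgoing edges. Conversely, no original terminal point is destroyed: CF-RM$_+$ never introduces an outgoing reaction at an old complex (new reactants are the fresh $y$-multiples), so any complex in $\mathscr{N}$ with no outgoing reactions still has no outgoing reactions in $\mathscr{N}^*$. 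Under the standing CF-RM$_+$ convention (already used implicitly in the proof of the preceding deficiency proposition) that original complexes persist in $\mathscr{N}^*$, this gives $t_p^* - t_p = r - r_{mcf}$ exactly.

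For part (ii), the key structural observation is that the subgraph of $\mathscr{N}^*$ induced by the new complexes is a disjoint union of simple ``stars'': each new reactant $y'$ has outgoing arrows only to fresh products $z_i'$, and each $z_i'$ has no outgoing arrow. Consequently no cycle in $\mathscr{N}^*$ can touch a fresh complex, so every cycle terminal class in $\mathscr{N}^*$ lies entirely in the old-complex subgraph equipped with the unchanged reactions only. Since this subgraph is obtained from $\mathscr{N}$ by edge deletion, any SCC of $\mathscr{N}^*$ is contained in an SCC of $\mathscr{N}$. I would then argue that the choice in CF-RM$_+$ of the \emph{maximal} CF-subset as the unchanged one, together with the outgoing edges from the NF-node $y$ preserved in that subset, prevents a cycle terminal class of $\mathscr{N}$ from splitting into two or more cycle terminal classes of $\mathscr{N}^*$. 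Part (iii) then follows by addition, since $t^* - t = (t_p^* - t_p) + (t_c^* - t_c) \le (r - r_{mcf}) + 0$.

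The main obstacle I foresee is precisely the justification in part (ii). Edge deletion in a digraph can, in principle, split a single non-terminal SCC into several terminal SCCs once an ``outgoing bridge'' is removed, and CF-RM$_+$ does exactly this kind of removal at NF-nodes. Pinning down why this pathology does not inflate $t_c$ under CF-RM$_+$ is the delicate step: one must use the fact that the unchanged CF-subset at each NF-node $y$ contributes reactions that either stay inside the original terminal SCC containing $y$ (if there is one) or keep $y$'s linkage-class outside of any terminal cycle, so that no previously non-terminal cyclic SCC newly becomes terminal, and any cycle terminal class broken by the transformation is replaced by at most one cyclic SCC in $\mathscr{N}^*$. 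Once this structural claim is isolated and verified, parts (i) and (iii) are essentially bookkeeping.
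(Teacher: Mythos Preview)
Your handling of (i) and (iii) is essentially the paper's: the paper refers (i) back to the proof of the preceding proposition (each transformed reaction yields exactly one fresh product which is a new terminal point, and no old terminal point gains an outgoing edge), and derives (iii) by adding (i) and (ii).

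For (ii) you have actually been more scrupulous than the paper, whose complete argument reads: a reversible pair can be broken up (so $t_c$ may drop), and ``no new cycles can emerge since there is no coincidence of new complexes with existing ones.'' That sentence only rules out cycles passing through fresh complexes; it does not address the scenario you rightly flag, namely an existing \emph{non-terminal} cyclic strong linkage class becoming terminal once its sole outgoing bridge at an NF-node is transformed away. Unfortunately your proposed remedy via the maximal retained CF-subset does not close this gap either. Take $\mathscr{N}$ with reactions $A\to B$, $B\to A$, $A\to C$, $D\to C$, where $A$ is an NF-node whose two branching reactions lie in distinct singleton CF-subsets. Keeping $A\to B$ and replacing $A\to C$ by $2A\to A+C$ is a valid CF-RM$_+$ move (both $2A$ and $A+C$ are fresh), yet in $\mathscr{N}^{*}$ the pair $\{A,B\}$ is now a cycle terminal class, giving $t_c^{*}=1>0=t_c$ and also $t^{*}-t=2>1=r-r_{mcf}$. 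So the obstacle you isolated is genuine: neither your outline nor the paper's one-line justification establishes (ii) (and hence (iii)) for an arbitrary CF-RM$_+$ transform.
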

\begin{proof} $i)$ was already shown (and used) in the previous Section.
For $ii)$ note that a reversible pair of reactions can be broken up into two irreversible reactions under CF-RM$_+$.  On the other hand, no new cycles can emerge since there is no coincidence of new complexes with existing ones.
$iii)$ follows by adding $i)$ and $ii)$.
\end{proof}

\begin{corollary} 
For a CF-RM$_+$ transform $\mathscr{N}^{*}, n^{*} = n + (N_R - n_r) + (r - r_{mcf})$.
\end{corollary}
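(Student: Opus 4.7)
The plan is to combine two ingredients already established in the paper: the universal CRN identity $n = n_r + t_p$ (number of complexes equals reactant complexes plus terminal points, which was invoked at the start of the proof of the preceding Proposition on deficiency change), together with the two entries $n_r^{*} = N_R$ from Table~\ref{t1} and $t_p^{*} - t_p = r - r_{mcf}$ from Proposition~\ref{Prop:tptc}(i).

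First I would apply the decomposition to both networks, writing $n = n_r + t_p$ and $n^{*} = n_r^{*} + t_p^{*}$. Subtracting gives
\[
n^{*} - n = (n_r^{*} - n_r) + (t_p^{*} - t_p).
\]
Then I would substitute $n_r^{*} - n_r = N_R - n_r$ (Table~\ref{t1}) and $t_p^{*} - t_p = r - r_{mcf}$ (Proposition~\ref{Prop:tptc}(i)), yielding
\[
n^{*} = n + (N_R - n_r) + (r - r_{mcf}),
\]
which is the claimed identity.

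There is no real obstacle here, since both substituted quantities are already proved; the only thing to be careful about is that the identity $n = n_r + t_p$ relies on every complex being either a reactant of some reaction or a terminal point (no intermediate role), and that under CF-RM$_{+}$ the newly introduced reactants and the newly introduced product complexes are genuinely new (guaranteed by conditions (a) and (b) defining CF-RM$_{+}$), so no double-counting occurs when partitioning the complex set of $\mathscr{N}^{*}$. In other words, the corollary is essentially a bookkeeping consequence of Proposition~\ref{Prop:tptc} and should be recorded as such, with the above three-line derivation suffice.
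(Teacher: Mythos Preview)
Your proof is correct and follows essentially the same route as the paper: the paper's proof simply states the identity $n^{*} - n = (n_r^{*} - n_r) + (t_p^{*} - t_p)$, substitutes $n_r^{*} = N_R$, and invokes Proposition~\ref{Prop:tptc}(i). Your additional remarks on why the partition $n = n_r + t_p$ is legitimate and why CF-RM$_{+}$ avoids double-counting are helpful clarifications, but the core argument is identical.
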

\begin{proof} In the identity $n^{*} - n = (n_r^{*} - n_r) + (t_p^{*} - t_p)$, we substitute $N_R$ for $n_r^{*}$ and use Proposition \ref{Prop:tptc}.i.
\end{proof}

\begin{example} (Running Example - Part 2) 
To apply CF-RM to Schmitz's carbon cycle model, we replace $R_3$, $R_4$, and $R_7$ with the following reactions:

$$\begin{array}{lll}
R_3^* &: 2M_1  &\rightarrow M_5 + M_1\\
R_4^* &: 2M_2 &\rightarrow M_1 + M_2\\
R_7^* &: 2M_3 &\rightarrow M_1 + M_3\\
\end{array}$$
Each of the new reactions forms a linkage class of $\mathscr{N}^{*}$, with the remaining original 10 reactions of $\mathscr{N}$ forming the fourth one depicted in Figure~\ref{fig:CFtrans}:

\begin{figure}[H]
    \centering
    \includegraphics[width=0.7\textwidth]{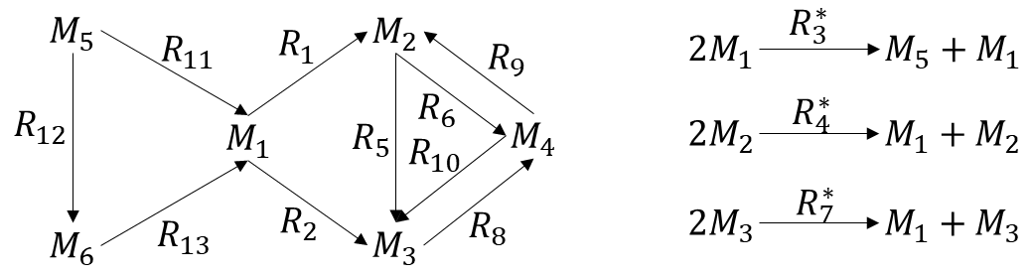}
\caption{The CRN after applying CF-RM to Schimtz's carbon cycle model in Figure~\ref{fig:smpriga}.}
\label{fig:CFtrans}
\end{figure}

\noindent Table~\ref{tableCF} presents the network numbers of the $\mathscr{N}^{*}$.
\begin{table}[H]
\caption{Key network number of CRN $\mathscr{N}^{*}$ of Schimtz's carbon cycle model.}
\center
\begin{tabular}{p{8cm} p{2cm}}
\hline
Network number& Value/bounds \\ 
\hline
Number of species &6  \\ 
\hline
Number of complexes &12 \\ 
\hline
 Number of reactions &13  \\ 
\hline
Number of reactant complexes &9 \\ 
\hline
Number of linkage classes &4 \\ 
\hline
Number of terminal strong linkage classes &4 \\ 
\hline
Deficiency &3  \\ 
\hline
\end{tabular}
\label{tableCF}
\end{table}

\noindent The network is $t$-minimal, but clearly not weakly reversible (in fact, it is point-terminal).  Note that it is also a CF-RM$_+$ transform.\\

 The $T$ matrix of the CF system $(\mathscr{N}^{*}, K^*)$ is given by:\\
 
${T}=$
 \begin{blockarray}{cccccccccc}
&$M_1$ & $M_2$ & $M_3 $& $M_4$ & $M_5$ & $M_6$ & $2M_1$  &$ 2M_2$  & $2M_3$  \\
\begin{block}{c[ccccccccc]}
$M_1$&1 &0 &0 &0 &0 &0 &0.36  &0 &0 \\
$M_2$&0 &1 &0 &0 &0 &0 &0  &9.4 &0 \\
$M_3$&0 &0 &1 &0 &0 &0 &0 &0 &10.2 \\
$M_4$&0 &0 &0 &1 &0 &0 &0 &0 &0 \\
$M_5$&0 &0 &0 &0 &1 &0 &0 &0 &0 \\
$M_6$&0 &0 &0 &0 &0 &1 &0 &0 &0 \\
\end{block}
\end{blockarray}.

\end{example}

\subsection{A Subspace Coincidence Theorem for NF kinetic systems}

In this Section, we present an initial application of CF-RM transformation by deriving a Subspace Coincidence Theorem for NF systems.

Arceo et al. \cite{csl:1} generalized the Subspace Coincidence Theorem of Feinberg and Horn \cite{csl:3} from MAK systems to CF systems as follows:

\begin{theorem}\label{thm:2}
For a complex factorizable system on a network $\mathscr{N}.$
\begin{itemize}
\item[1)] If $t-l>\delta$, then $K\ne S$.
\item[1')] If $0<t-l=\delta$, and a positive steady state exists, then $K\ne S$. In fact $dim \ S-dim \ K\ge t-l-\delta+1$\\
if the system is also factor span surjective.
\item[2)] If $t-l=0$ (i.e., $\mathscr{N}$ is $t-$minimal), then $K=S$. 
\item[3)] If $0<t-l<\delta$ or $=\delta$ and a positive steady state does not exist, then it is rate constant dependent whether $K=S$ or not.  
\end{itemize}
\end{theorem}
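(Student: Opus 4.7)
The plan is to mimic Feinberg--Horn's mass-action proof by exploiting the structural parallel that the CF factorization $f = Y \circ A_k \circ \psi_K$ gives to the MAK case. The kinetic subspace is $K = Y(\Ima A_k)$ and the stoichiometric subspace is $S = Y(\Ima I_a)$; since $\Ima A_k \subseteq \Ima I_a$, the inclusion $K \subseteq S$ is automatic, so everything reduces to controlling codimensions on the complex-space side $\mathbb{R}^\mathscr{C}$ and then pushing the conclusions through $Y$.

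The central bookkeeping step is to establish the identity
\begin{equation*}
\dim S - \dim K \;=\; (t - l) - \delta + \dim(\ker Y \cap \Ima A_k),
\end{equation*}
using three standard facts: $\operatorname{rank} I_a = n - l$, $\operatorname{rank} A_k = n - t$, and the classical identification $\delta = \dim(\ker Y \cap \Ima I_a)$. With this identity and the trivial bound $\dim(\ker Y \cap \Ima A_k) \le \delta$ in hand, cases (1) and (2) fall out immediately. For (1), $t - l > \delta$ gives $\dim S - \dim K \ge (t - l) - \delta > 0$. For (2), $t - l = 0$ forces $\Ima A_k = \Ima I_a$ (equal ranks, one contained in the other), so $K = S$. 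Case (3) only requires exhibiting two explicit small systems, one with $K = S$ and one with $K \neq S$, showing the outcome depends on the numerical value of $k$.

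The real work is case (1'), where $t - l = \delta > 0$ and the identity reduces to $\dim S - \dim K = \dim(\ker Y \cap \Ima A_k)$, so strict positivity of the right-hand side is what must be extracted. The input that should deliver this is the hypothesis that a positive steady state $x^*$ exists: then $A_k \psi_K(x^*) = 0$ with $\psi_K(x^*)$ strictly positive, so $\ker A_k$ contains a strictly positive complex vector $v^*$. I would then argue that the vector $v^*$ cannot lie entirely outside $\ker Y \cap \Ima A_k$ once one uses factor span surjectivity of $\psi_K$, which asserts $\spn(\Ima \psi_K) = \mathbb{R}^\mathscr{C}$ and effectively lets one replace $\psi_K$ by the identity when computing the kinetic subspace.

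The main obstacle is precisely this translation from the analytic datum (existence of a positive equilibrium) to an algebraic dimension drop inside $\ker Y \cap \Ima A_k$. My strategy for the sharper claim $\dim S - \dim K \ge t - l - \delta + 1$ is to use factor span surjectivity to identify the CF kinetic subspace with that of a MAK system sharing the same $A_k$, thereby reducing the bound to the already-known Feinberg--Horn inequality in the MAK setting; the qualitative statement $K \neq S$ without span surjectivity should then follow from a weaker version of the same argument, namely that the existence of a strictly positive element of $\ker A_k$ alone is enough to guarantee at least one independent vector in $\ker Y \cap \Ima A_k$, even if one cannot quantify how many.
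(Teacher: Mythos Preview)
The paper does not prove this theorem. Theorem~\ref{thm:2} is quoted verbatim as a result of Arceo et al.~\cite{csl:1}, introduced with ``Arceo et al.~\cite{csl:1} generalized the Subspace Coincidence Theorem of Feinberg and Horn~\cite{csl:3} from MAK systems to CF systems as follows,'' and no proof is given; the paper only \emph{uses} it later to derive Theorem~\ref{thm:3}. So there is nothing in this paper for your proposal to be compared against.

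That said, a remark on your outline itself: your identification $K = Y(\Ima A_k)$ is not automatic for CF systems. In general $K = \spn\bigl(Y A_k\,\psi_K(\Omega)\bigr) \subseteq Y(\Ima A_k)$, with equality precisely when $\psi_K$ is span surjective. This is harmless for part~(1), since $K \subseteq Y(\Ima A_k) \subsetneq S$ still gives $K \neq S$, but your argument for part~(2) silently relies on the factor span surjectivity hypothesis (which in the theorem statement is attached to items (1')--(3), not just to the sharper bound). Your treatment of~(1') is also only a plan: the passage from ``$\ker A_k$ contains a strictly positive vector'' to ``$\ker Y \cap \Ima A_k \neq 0$'' is the substantive step in the Feinberg--Horn argument, and you have not indicated how you would carry it out beyond invoking a reduction to the MAK case. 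If you want to see the actual proof, it lives in~\cite{csl:1}, not in this paper.
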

We first note that for any CF-RM transform $\mathscr{N}^{*}$, we not only have coincidence of stoichiometric subspaces $S= S^{*}$ but also coincidence of the kinetic subspaces $K = K^{*}$ (due to the dynamic equivalence, $f = f^*$, implying $\Ima f = \Ima f^{*}$ and $\spn (\Ima f) = \spn (\Ima f^{*})$.

Our approach is to identify properties for an NF system so that its CF-RM$_+$ transform satisfies the conditions of the Theorem above.  Our first step is to extend the kinetics concept of factor span surjectivity, which is currently defined only for CF systems, to any RID kinetic system.

A CF-subset $\mathscr{R}_i$ is characterized by the common interaction map $I_K( \mathscr{R}_ i)$ of the kinetics of its reactions. This leads to the following definition:

\begin{definition}
A RID kinetics is interaction span surjective if and only if the set $\{I_K( \mathscr{R}_ i)\}$ of its CF-subset interaction maps is linearly independent.
\end{definition}

The following Proposition shows that ``interaction span surjectivity" is the correct extension of the factor span surjectivity concept.

\begin{proposition}
If $(\mathscr{N}, K)$ is interaction span surjective, then its CF-transform $(\mathscr{N}^{*}, K^{*})$ is also factor span surjective.
\end{proposition}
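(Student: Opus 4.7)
The plan is to trace through the CF-RM construction and observe that the transformation sets up a canonical bijection between the CF-subsets of $(\mathscr{N},K)$ and the reactant complexes of $(\mathscr{N}^*, K^*)$ that preserves the relevant interaction functions. Once this bijection is in hand, factor span surjectivity of $K^*$ follows from interaction span surjectivity of $K$ essentially by unpacking definitions and invoking the criterion from \cite{csl:1} that characterizes span surjectivity by linear independence of coordinate functions.

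First I would fix notation by listing the CF-subsets of $(\mathscr{N},K)$ as $\{\mathscr{R}_i\}$, indexed so that each CF-node $y$ contributes its single reaction set $\rho^{-1}(y)$ and each NF-node $y$ contributes $\mathscr{R}_1(y),\dots,\mathscr{R}_{N_R(y)}(y)$. Next I would recall from the proof of Theorem~\ref{thm:1} that CF-RM sends each such $\mathscr{R}_i$ to a set of transformed reactions all sharing a new reactant complex $y_i^{*}$, while every reaction's kinetic function is preserved; hence for any reaction $r \in \mathscr{R}_i$ and its image $r^{*}\in\rho^{*{-1}}(y_i^{*})$ one has $I_{K^*,r^*} = I_{K,r} = I_K(\mathscr{R}_i)$. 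In particular the reactant complexes of $\mathscr{N}^*$ are indexed exactly by the CF-subsets $\{\mathscr{R}_i\}$, and since at each reactant $y_i^*$ all branching reactions of $K^*$ share the common interaction function $I_K(\mathscr{R}_i)$, the transform is indeed CF with a well-defined factor map $\psi_{K^*}: \Omega \to \mathbb{R}^{\mathscr{C}^*}$.

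The second step is to identify the coordinate functions of $\psi_{K^*}$. For a CF kinetics the defining decomposition $K^* = I_{k^*}\circ \psi_{K^*}$ forces $(\psi_{K^*})_{y_i^*} = I_K(\mathscr{R}_i)$ at every reactant complex, while non-reactant (terminal) coordinates are the constant $1$. By the hypothesis of interaction span surjectivity the reactant-indexed family $\{I_K(\mathscr{R}_i)\}$ is linearly independent, and the constant coordinates do not destroy this property, so the full family of coordinate functions of $\psi_{K^*}$ is linearly independent. Applying the characterization from \cite{csl:1} (span surjective $\Longleftrightarrow$ linearly independent coordinate functions), $\psi_{K^*}$ is span surjective, i.e., $(\mathscr{N}^*, K^*)$ is factor span surjective.

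The main obstacle is the careful bookkeeping in the first step: one must verify that the CF-RM construction really does produce a bijection between CF-subsets of $\mathscr{N}$ and reactant complexes of $\mathscr{N}^*$ (so that no two distinct CF-subsets collide into a single new reactant and no extra spurious reactants are introduced), and that the kinetic functions genuinely transfer without alteration. These are precisely the invariants that CF-RM was designed to maintain (choose each $y_i$ so that $y+y_i$ is not already in the current reactant set, and transform only the reaction vectors "catalytically"), so the substantive content reduces to citing the construction of Theorem~\ref{thm:1}; everything after that is a direct translation between the two definitions.
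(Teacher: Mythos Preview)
Your strategy is exactly the paper's: set up the bijection between CF-subsets of $(\mathscr{N},K)$ and reactant complexes of $(\mathscr{N}^*,K^*)$ (the paper phrases this as the count $N_R^{*}=n_r^{*}=N_R$), observe that the interaction functions transfer unchanged under CF-RM, and then translate linear independence across. So at the level of ideas there is nothing new to say.

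There is one step you should tighten. You assert that ``the constant coordinates do not destroy this property, so the full family of coordinate functions of $\psi_{K^*}$ is linearly independent.'' As written this is not justified, and in fact it fails whenever $\mathscr{N}^*$ has two or more non-reactant complexes: by the factor map convention in the paper those coordinates are all the constant function $1$, hence linearly dependent. The paper avoids this pothole by working through the identity $I_{K^*}=\rho'\circ\psi_{K^*}$, which shows that factor span surjectivity of a CF kinetics is governed solely by the reactant-indexed coordinates of $\psi_{K^*}$ (Proposition~1 being the PL-RDK instance). You should either invoke that reactant-only characterization directly, or argue via $I_{K^*}=\rho'\circ\psi_{K^*}$ as the paper does, rather than claiming linear independence of the full coordinate family.
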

 
\begin{proof} Since $(\mathscr{N}^{*}, K^{*})$ is CF, $N_R^{*} = n_r^{*}$. On the other hand, the latter is equal to $N_R$. Hence the set of interaction maps of $\mathscr{N}$ and $\mathscr{N}^{*}$ coincide. For a CF system, since $I_K = \rho' \cdot \psi_K$, it is clear that linear independence of both sets are equivalent.
\end{proof}

As a second step, we identify the network properties of the NF-system  $(\mathscr{N}, K)$ such that the properties needed to apply the various statements of the Theorem to  $(\mathscr{N}^{*}, {K}^{*})$ are ensured.

We first state two Lemmas.

\begin{lemma}
If $\mathscr{N}$   is SRD, then $\mathscr{N}^{*}$ is also SRD.
\end{lemma}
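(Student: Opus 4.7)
The plan is to exploit the two invariance properties built into the CF-RM construction, namely that both the stoichiometric subspace $S$ and the reactant subspace $R$ are preserved. The first invariance is immediate from the "catalytic" structure of the transformation: each transformed reaction $r_i': y + m_y y \rightarrow z_i + m_y y$ has the same reaction vector $z_i - y$ as the original $r_i: y \rightarrow z_i$, so
\[
S^{*} = \spn\{z_i - y \mid r_i \in \mathscr{R}^{*}\} = \spn\{z_i - y \mid r_i \in \mathscr{R}\} = S.
\]
The second invariance follows from the "RM" (reactant multiples) refinement: every newly introduced reactant is of the form $(1 + m_y)y$, a positive scalar multiple of an original reactant complex $y \in \rho(\mathscr{R})$. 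Hence every new reactant already lies in $R = \spn(\rho(\mathscr{R}))$, giving $R^{*} \subseteq R$; and since the unchanged CF-subsets retain all the original reactants, $R \subseteq R^{*}$, so $R^{*} = R$.

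With these invariants in hand, the lemma reduces to the observation that SRD is a property formulated purely through the relative position of $R$ and $S$ inside $\mathbb{R}^{\mathscr{S}}$ (e.g.\ $S \subseteq R$, or the subspace-coincidence condition $R = S$). Since both $S$ and $R$ are unchanged, any such relation transfers verbatim from $\mathscr{N}$ to $\mathscr{N}^{*}$. The argument is therefore structural rather than computational: one does not need to inspect the new reactions beyond noting that they neither enlarge $S$ (reaction vectors are preserved) nor enlarge $R$ (new reactants are collinear with old ones).

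The main obstacle I anticipate is purely definitional: verifying that SRD really is a condition on the pair $(R,S)$ and does not smuggle in auxiliary data about individual reactant complexes (for example, integrality of reactant stoichiometric coefficients, or distinctness of reactants). If SRD carries such finer requirements, I would add one short verification step showing that the new reactants $(1 + m_y)y$, being nonnegative-integer multiples of existing reactants, automatically respect them. Aside from this bookkeeping, no substantive calculation is required, and the lemma follows directly from the two subspace invariants of CF-RM.
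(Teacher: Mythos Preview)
Your argument rests on the assumption that SRD is a condition on the pair of \emph{subspaces} $(R,S)$, and you then invoke the CF-RM invariants $S^{*}=S$ and $R^{*}=R$. Those invariants are correct, but the premise about SRD is not: in this paper SRD (``sufficient reactant diversity'') is the purely numerical inequality
\[
n_r \ge s,
\]
comparing the \emph{count} of reactant complexes with the network rank $s=\dim S$. It is not a containment or coincidence statement about $R$ and $S$. Consequently, knowing $R^{*}=R$ (hence $q^{*}=q$) and $S^{*}=S$ (hence $s^{*}=s$) does not by itself yield $n_r^{*}\ge s^{*}$: the reactant rank $q$ is in general strictly smaller than $n_r$, so preserving $q$ says nothing about $n_r^{*}$.

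The paper's proof is a single chain of inequalities,
\[
n_r^{*} = N_R \ge n_r \ge s = s^{*},
\]
using that the CF-RM transform has exactly $N_R$ reactant complexes and that $N_R\ge n_r$ always. Your write-up actually contains the ingredients for this: you note that every original reactant survives (one CF-subset at each node is left unchanged) and that new reactants are added, which gives $n_r^{*}\ge n_r$; and your $S^{*}=S$ gives $s^{*}=s$. What is missing is simply to combine those two facts directly, rather than routing through the subspace $R$, which is the wrong invariant for this lemma.
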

\begin{proof}  $n^{*}_r = N_R \ge n_r \ge s = s^{*}$.
\end{proof}

The second Lemma is a general relationship between TBD and SRD networks derived from a (submitted) manuscript by Farinas et al. entitled ``Species subsets and embedded networks of S-systems":

\begin{lemma}
Let $\mathscr{N}$ be a chemical reaction network. 
\begin{itemize}
\item[i)] A network with deficiency-bounded terminality (TBD) has sufficient reaction diversity. 
\item[ii)] If the network is point terminal, then the converse also holds, i.e., $TBD\le SRD$ (or equivalently $TND\ge LRD$).
\end{itemize}
\end{lemma}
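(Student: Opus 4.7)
The plan is to unpack the definitions and reduce both statements to one algebraic identity plus one inequality. Recall that SRD means $n_r \ge s$, TBD means $t - l \le \delta$, and for any CRN we have the identities $\delta = n - l - s$ and $n = n_r + t_p$, together with $t = t_p + t_c$ where $t_c \ge 0$; the equality $t_c = 0$ is precisely the point terminal condition.

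For part (i), I would start from the TBD hypothesis $t - l \le \delta$, substitute $\delta = n - l - s$, and cancel $l$ on both sides to obtain $t \le n - s$. Since $t = t_p + t_c \ge t_p$, this forces $t_p \le n - s$, and using $n - t_p = n_r$ I conclude $n_r \ge s$, so SRD holds.

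For part (ii), point terminality gives $t_c = 0$ and hence $t = t_p$, so $n - t = n - t_p = n_r$. The SRD hypothesis $n_r \ge s$ is then equivalent to $t \le n - s$; subtracting $l$ from both sides and using $\delta = n - l - s$ again yields $t - l \le \delta$, which is TBD. The contrapositive rephrasing LRD $\Rightarrow$ TND follows immediately.

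The only subtle point, and the reason the converse in (ii) requires the point terminal hypothesis, is that $t \ge t_p$ is strict in general whenever cycle terminal strong linkage classes are present. SRD alone only pins down $n - t_p \ge s$, which does not bound $t = t_p + t_c$ from above unless $t_c$ is known to vanish. So the whole argument amounts to careful bookkeeping of $t_p$ versus $t$; I expect no genuine obstacle beyond that.
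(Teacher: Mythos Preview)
Your argument is correct. Note that the paper does not actually supply its own proof of this lemma; it quotes the result from a then-submitted manuscript of Farinas et al., so there is no in-paper proof to compare against. Your derivation---reducing both directions to the identities $n_r = n - t_p$, $\delta = n - l - s$, and $t = t_p + t_c$, and observing that point terminality is precisely $t_c = 0$---is the natural bookkeeping argument and is complete as written.
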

We can now state and prove a Subspace Coincidence Theorem for NF-systems:

\begin{theorem}\label{thm:3}
Let $(\mathscr{N}, {K})$ be an NF RIDK system.
\begin{itemize}
\item[1)] If $N_R < s$, then $K \ne S$.

\noindent If the system is also intersection span surjective, then either 
\item[2)] $\mathscr{N}$  is $t-$minimal and  $r - r_{mcf}= N_R - n_r$, implies $K = S$; or
\item[3)] $\mathscr{N}$ is TBD and point terminal, implies that $K = S$ is rate-constant dependent.
\end{itemize}
\end{theorem}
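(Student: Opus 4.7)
The plan is to reduce the theorem to Theorem~\ref{thm:2}, the corresponding subspace coincidence result for CF kinetics, by passing to a CF transform. Given an NF RIDK system $(\mathscr{N},K)$, Theorem~\ref{thm:1} provides a dynamically equivalent CF system $(\mathscr{N}^*,K^*)$, obtained via CF-RM or, where finer control of $n^*$, $\delta^*$ and $t^*$ is needed, via CF-RM$_+$. Since $f = f^*$, the kinetic subspaces coincide; likewise $S = S^*$. The proposition that interaction span surjectivity of $K$ transfers to factor span surjectivity of $K^*$ then makes Theorem~\ref{thm:2} applicable to $(\mathscr{N}^*,K^*)$ under each of the three hypothesis sets, and the conclusion transfers back by subspace coincidence.

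For part 1), a direct dimension bound suffices. Within a CF-subset every reaction shares a common interaction map, so the image of the kinetics map lies in the span of $N_R$ vectors, each encoding the rate constants of one CF-subset. Consequently the kinetic subspace satisfies $\dim K \le N_R < s = \dim S$, forcing $K \ne S$. Equivalently, via the CF transform one has $n_r^* = N_R < s^* = s$, and a short calculation (using $n^* = n_r^* + t_p^*$) gives $t^* - l^* - \delta^* = s^* - n_r^* + t_c^* > 0$, so Theorem~\ref{thm:2}(1) applies. Note that part 1) does not require interaction span surjectivity.

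For parts 2) and 3) I would take $(\mathscr{N}^*,K^*)$ to be a CF-RM$_+$ transform and translate each network-theoretic hypothesis to the CF side. In part 2), the identity $r - r_{mcf} = N_R - n_r$ is exactly the statement that every transformed CF-subset contains a single reaction, so each attaches one fresh reactant and one fresh terminal point and contributes one new linkage class, the three new additions cancelling in $t^* - l^*$. Combined with $t$-minimality of $\mathscr{N}$ (which I would show forces $l_b^* = 0$ and $t_c^* = t_c$), this gives $t^* = l^*$, so Theorem~\ref{thm:2}(2) yields $K^* = S^*$ and hence $K = S$. In part 3), point terminality of $\mathscr{N}$ gives $t_c = 0$, and Proposition~\ref{Prop:tptc}(ii) forces $t_c^* = 0$, so $\mathscr{N}^*$ is also point terminal; the identities $\delta^* - \delta = (r - r_{mcf}) - l_b^*$ and $(t^* - l^*) - \delta^* = (t - l - \delta) - (N_R - n_r) + (t_c^* - t_c)$ together with TBD of $\mathscr{N}$ yield $0 < t^* - l^* \le \delta^*$, placing $(\mathscr{N}^*, K^*)$ in case 3 of Theorem~\ref{thm:2}. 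Since the rate constants of the NF system and its CF-RM$_+$ transform are in natural correspondence, rate-constant dependence of $K^* = S^*$ transfers back to $K = S$.

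The main obstacle is the precise accounting of $l_b^*$ and $t_c^* - t_c$ under CF-RM$_+$: for part 2) the equality $t^* = l^*$ requires both $l_b^* = 0$ (no link-breaking) and $t_c^* = t_c$ (no cycle terminal class destroyed), neither of which is automatic from the generic estimates. I would isolate this as a structural lemma: under $t$-minimality every linkage class is a single terminal strong linkage class, hence strongly connected, so every edge lies on a directed cycle; consequently removing a single edge preserves undirected connectivity ($l_b^* = 0$), and the single-reaction CF-subset structure ensures that each such removal redistributes but does not collapse the cycle terminal classes ($t_c^* = t_c$). Once this lemma is in place, parts 2) and 3) close by the computations above.
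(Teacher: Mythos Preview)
Your overall strategy---pass to a CF-RM$_+$ transform, transfer interaction span surjectivity to factor span surjectivity, and invoke Theorem~\ref{thm:2}---is exactly the paper's. Your treatments of parts 1) and 3) are sound and essentially equivalent to what the paper does (the paper routes part 3) through Lemmas~1--2 rather than your explicit identity $(t^*-l^*)-\delta^* = (t-l-\delta)-(N_R-n_r)+(t_c^*-t_c)$, but the content is the same).

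The genuine gap is in part 2). Your proposed structural lemma does not close as sketched: $t$-minimality makes each linkage class strongly connected, but you are not removing \emph{a single edge}. You are removing $r-r_{mcf}$ reactions, and even under the hypothesis $r-r_{mcf}=N_R-n_r$ there can be several NF nodes in the same linkage class, so several edges may leave one strongly connected component simultaneously. Strongly connected graphs can certainly be disconnected by removing two or more arcs, so neither $l_b^*=0$ nor $t_c^*=t_c$ follows from your argument.

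The paper avoids this entirely by a sandwich that requires no separate control of $l_b^*$ or $t_c^*$. For CF-RM$_+$ every transformed CF-subset consists of brand-new complexes, so it contributes exactly one new linkage class; hence $l^* \ge l + (N_R-n_r)$. Using $t$-minimality ($l=t=t_p+t_c$) and Proposition~\ref{Prop:tptc}(i),
\[
t^*-l^* \;\le\; \bigl[t_p+(r-r_{mcf})+t_c^*\bigr]-\bigl[(t_p+t_c)+(N_R-n_r)\bigr]
\;=\;(r-r_{mcf})-(N_R-n_r)+(t_c^*-t_c).
\]
The hypothesis kills the first difference, and Proposition~\ref{Prop:tptc}(ii) gives $t_c^*-t_c\le 0$. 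Since $t^*-l^*\ge 0$ always, you get $0\le t^*-l^*\le 0$, hence $t^*=l^*$, and $l_b^*=0$ and $t_c^*=t_c$ fall out \emph{a posteriori}. Replace your lemma with this inequality chain and part 2) closes cleanly.

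A minor point in part 3): you assert $0<t^*-l^*$, but nothing rules out $t^*-l^*=0$. The paper simply asserts that $\mathscr{N}^*$ is TBD and appeals to case 3) without isolating this; strictly speaking the conclusion should allow the degenerate possibility that case 2) applies and $K=S$ unconditionally.
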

\begin{proof}
\begin{itemize}
\item[1)] $n^{*}_r = N_R < s = s'$ means that $\mathscr{N}^{*}$ is LRD. By Lemma 2 $(i)$, it follows that it is also TND, and by (1) of the  KSSC in \cite{csl:21}, $K = K^{*} \ne S^{*} = S$.
\item[2)] In order to apply (2) of the Arceo et al. KSSC \cite{csl:21}, we need to show that there is a CF-RM transform such that $\mathscr{N}^{*}$ is $t-$minimal, or $t^{*} - l^{*} = 0$.   We calculate this difference for an CF-RM$_+$ transform as follows:  $t^{*} - l^{*} = t^{*}_p + t^{*}_c - l^{*} = r  - r_{mcf} + t_p + t^{*}_c- l^{* }$(by Proposition \ref{Prop:tptc}) $=  r  - r_{mcf} + t_p + t^{*}_c - ((N_R-n_r) + t_p + t_c)$ (based on the properties of CF). After canceling terms, we obtain    $0 \le  t^{*} - l^{*} = t^{*}_c - t_c  \le 0$ (by Proposition \ref{Prop:tptc}), implying the claim.
\item[3)] $\mathscr{N}$ is point terminal $\Rightarrow \mathscr{N}^{*}$ is point terminal ( by Proposition \ref{Prop:tptc}.ii), hence after Lemma 1 and Lemma 2 $(ii)$, $\mathscr{N}^{*}$ is also  TBD, and (3) of Arceo et al. Theorem can be applied, implying $K = K^{*} = S^{*} = S$ is rate constant dependent. 
\end{itemize}
\end{proof}

\begin{remark}
Since both the stoichiometric and reactant subspaces of an NF system and its CF-RM transform coincide, the underlying networks have the same R and S class introduced in \cite{csl:21}. This implies that a Theorem for the coincidence of kinetic and reactant subspaces of NF systems analogous to that for CF-systems derived in \cite{csl:21} can also be stated and proved.
\end{remark}

\section{Linear conjugacy of RID kinetic systems}
In this Section, we present a solution to the problem of finding linear conjugates of any RID kinetic system. After extending the Johnston-Siegel Criterion (JSC) for linear conjugacy to CF  systems, we can generate linear conjugates for any RID kinetic system by applying the JSC to any CF-RM transform of the given system. We also discuss some computational challenges regarding the solution approach.
\subsection{The Johnston-Siegel Criterion for linear conjugacy (JSC) of CF kinetic systems}
\begin{theorem}\label{thm:4}
Consider two CF systems $\left( \mathscr{N}, {K}\right)$ and $\left( \mathscr{N'}, {K'}\right)$ with $\mathscr{N} = \left( \mathscr{S}, \mathscr{C}, \mathscr{R} \right)$ and $\mathscr{N}' = \left( \mathscr{S'}, \mathscr{C'}, \mathscr{R'} \right)$. Let $Y = Y'$ be the matrix of complexes for both networks. Suppose further that the factor maps coincide, i.e., $\psi_K = \psi_{K}'$.  Let $A_b$ be a Laplacian with the same structure as that of $\left(\mathscr{N'}, \mathscr{K'}\right)$ and $c$,  a positive vector in $\mathbb{R}^m$ such that   $Y\cdot A_k = C\cdot Y\cdot A_b$ , where $C = \diag (c)$.
Then $\mathscr{N}$   is linearly conjugate to $\mathscr{N'}$ with the Laplacian  $A_k' = A_b \cdot \diag (\psi_{K'(c)})$.
\end{theorem}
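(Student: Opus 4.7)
The plan is to verify directly that the bijection $h(x) = C^{-1} x$ furnishes a linear conjugacy between $(\mathscr{N},K)$ and the kinetic system on $\mathscr{N}'$ whose Laplacian is the proposed $A_k' = A_b \cdot \diag(\psi_{K'}(c))$. Since both systems are complex factorizable, their ODEs take the explicit form
\begin{equation*}
\dot{x} = Y \cdot A_k \cdot \psi_K(x), \qquad \dot{y} = Y \cdot A_k' \cdot \psi_{K'}(y),
\end{equation*}
so it suffices to check that whenever $x(t)$ solves the first ODE, the curve $y(t) = C^{-1} x(t)$ solves the second.

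First, I compute $\dot{y}$ using the ODE for $x$: by linearity of $C^{-1}$ and the chain rule,
\begin{equation*}
\dot{y} = C^{-1} \dot{x} = C^{-1} \cdot Y \cdot A_k \cdot \psi_K(x) = C^{-1} \cdot Y \cdot A_k \cdot \psi_K(Cy).
\end{equation*}
The central algebraic step is the multiplicativity identity $\psi_K(Cy) = \diag(\psi_K(c)) \cdot \psi_K(y)$, which follows coordinate-wise from the monomial structure of $\psi_K$ given in its definition, via the standard power-law computation $(Cy)^{F_i} = c^{F_i} \cdot y^{F_i}$ on each reactant coordinate (and the fact that non-reactant coordinates are constantly $1$). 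Substituting this identity, and then the hypothesized matrix equation $Y \cdot A_k = C \cdot Y \cdot A_b$, produces a telescoping cancellation of $C^{-1}$ against $C$:
\begin{equation*}
\dot{y} = C^{-1} \cdot (C \cdot Y \cdot A_b) \cdot \diag(\psi_K(c)) \cdot \psi_K(y) = Y \cdot A_b \cdot \diag(\psi_K(c)) \cdot \psi_K(y).
\end{equation*}
Because $\psi_K = \psi_{K'}$ by hypothesis, the right-hand side is exactly $Y \cdot A_k' \cdot \psi_{K'}(y)$, matching the dynamics of the $\mathscr{N}'$-system with Laplacian $A_k'$. Since $h$ is a diagonal bijection of $\mathbb{R}^m_{>0}$ of the form admitted by Remark 1, this certifies linear conjugacy.

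I expect the principal technical point to be the justification of the multiplicativity identity $\psi_K(Cy) = \diag(\psi_K(c)) \cdot \psi_K(y)$: this is the single place where the concrete monomial form of $\psi_K$ (rather than merely its abstract role as a factor in the CF decomposition) has to be invoked, and it is precisely what extends the classical Johnston-Siegel criterion beyond MAK to the full CF-RID class covered here. A secondary sanity check is that $A_k' = A_b \cdot \diag(\psi_{K'}(c))$ is a valid Laplacian sharing the digraph structure of $A_b$, which is immediate because right-multiplication by a positive diagonal matrix merely rescales each column of $A_b$ without changing its sign pattern. With those observations in hand, the remainder of the verification is a routine matrix chase chaining the CF decomposition with the hypothesis $Y \cdot A_k = C \cdot Y \cdot A_b$.
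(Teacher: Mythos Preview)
Your proof is correct and follows essentially the same route as the paper: set $h(x)=C^{-1}x$, differentiate $y=C^{-1}x$, invoke the multiplicativity identity $\psi_K(Cy)=\diag(\psi_K(c))\,\psi_K(y)$ from the monomial form of the factor map, and use the hypothesis $Y A_k = C Y A_b$ to cancel $C^{-1}$ against $C$. The only cosmetic difference is that the paper applies the matrix hypothesis before the multiplicativity step rather than after, and writes $D=\diag(e)$ with $e_j=c^{F_{\cdot j}}$ in place of your $\diag(\psi_K(c))$; these are the same object.
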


\begin{proof} Let $\varphi \left( {{x_o},t} \right)$ be the solution of the system of ODE $\dot{x}  = f\left( x \right) = Y\cdot{A_k}\cdot{\psi _K}$ associated to the reaction network $\mathscr N$.\\ Consider the linear map $h\left( x \right) = {C^{ - 1}}x$  where $C = diag(c)$, $c \in \mathbb{R}_{ > 0}^n$. \\ Let $\tilde\varphi  \left( {{y_0},t} \right) = {C^{ - 1}}\varphi \left( {{x_0},t} \right)$ so that $\varphi \left( {{x_0},t} \right) = C\tilde\varphi  \left( {{y_0},t} \right)$. It follows that
\begin{align}
  \tilde \varphi '\left( {{y_0},t} \right) &= {C^{ - 1}}\cdot\varphi '\left( {{x_0},t} \right) \hfill \nonumber \\
 & = {C^{ - 1}}\cdot Y\cdot{A_k}\cdot{\psi _K}\left( {\varphi \left( {{x_0},t} \right)} \right) \hfill \nonumber\\
  &= {C^{ - 1}}\cdot C\cdot Y\cdot{A_b}\cdot{\psi _K}\left( {C\tilde\varphi  \left( {{y_0},t} \right)} \right) \hfill  \nonumber 
\end{align}
Now, 
\begin{align}
  \psi_K \left( {C\tilde \varphi \left( {{y_0},t} \right)} \right) &= {\psi _K}\left( {diag(c)\tilde \varphi  \left( {{y_0},t} \right)} \right) \hfill \nonumber \\
  &= D\cdot{\psi _K}\left( {\tilde\varphi  \left( {{y_0},t} \right)} \right) \nonumber \hfill 
\end{align}
where $D = diag (e)$ and ${e_j} = \left\{ {\begin{array}{*{20}{c}}
  {{c^{{F._j}}}{\text{ , if complex }}j{\text{ is a reactant of some reaction }}k{\text{ }}} \\ 
  {1{\text{, otherwise}}} 
\end{array}} \right.$\\
So, $\tilde \varphi ' \left( {{y_0},t} \right) = Y\cdot{A_b}\cdot D\cdot{\psi _K}\left( {\tilde \varphi \left( {{y_0},t} \right)} \right)$. Clearly, $\tilde \varphi \left( {{y_0},t} \right)$ is a solution of the system $\dot{x}  = Y\cdot{A_b}\cdot D\cdot{\psi _K}$ corresponding to the reaction network $\mathscr N'$. We have that $h\left( {\varphi \left( {{x_0},t} \right)} \right) = \tilde \varphi \left( {h\left( {{x_0}} \right),t} \right)$ for all ${x_0} \in \mathbb{R}_{ > 0}^n$ and $t\ge 0$ where $y_0=h(x_0)$ since ${y_0} = \tilde \varphi \left( {{y_0},t} \right) = {C^{ - 1}}{y_0} = \varphi \left( {{y_0},t} \right)$. It follows that networks $\mathscr N$ and $\mathscr N'$ are linearly conjugate.
\end{proof}

\subsection{A solution to the linear conjugacy problem of RID kinetic systems}
A solution approach to the linear conjugacy problem of RID kinetic systems is clearly to first transform the system if necessary (i.e., if it is an NF system) via CF-RM to a CF system and then apply the Johnston-Siegel Criterion to generate linearly conjugate systems. The second step could be done using MILP algorithms based on the JSC, once these are extended to appropriate CF systems (cf. Section 6).

\begin{example} (Running Example - Part 3)   
In \cite{FMRL2019}, Fortun et al.~derived a Deficiency Zero Theorem for a class of NF power law kinetic systems and applied it to a subsystem of the Schmitz's carbon cycle model to establish the existence of positive equilibria for the subsystem. The authors then used a ``Lifting Theorem" of \cite{JOSH2013} to show the existence of corresponding positive equilibria for the whole system. Here, we provide an alternative approach for this result by using the MILP algorithm of \cite{csl:8}, a special case of the MILP algorithm introduced in Section~6, to construct a weakly reversible PL-TIK system, which is linearly conjugate to the CF-transform of Schmitz's model discussed previously. The results of \cite{csl:7} show that this weakly reversible system has positive equilibria, and hence so does its linear conjugate, the Schmitz's carbon cycle model.
\end{example}

\noindent The sparse linear conjugate of $(\mathscr{N}^*, K^*)$ was obtained using the MILP algorithm, described in \cite{csl:8}. The algorithm seeks to generate linearly conjugate realizations for a class of power-law kinetic systems, i.e., PL-RDK. Prior to the implementation of the algorithm, the map of complexes $Y$, the Laplacian map $A_k$, and kinetic order matrix $F$ are required to be set first. The matrix $F$ was given in the preceding section. The following are the  associated matrices $Y$ and $A_k$ of the system.

$Y=$
 \begin{blockarray}{ccccccccccccc}
&$C_1$ &$ C_2$  & $C_3$ & $C_4$ & $C_5 $& $C_6$ & $C_7$ & $C_8$ & $C_9$ & $C_{10}$ & $C_{11}$ & $C_{12}$\\
\begin{block}{c[cccccccccccc]}
$M_1$ &1	 &0	 &0	 &2	 &1	 &0	 &1	 &0	 &0	 &1	&0	&0\\
$M_2$ &0	 &1	 &0	 &0	 &0	 &2	 &1	 &0	 &0	 &0	&0	&0\\
$M_3$ &0	 &0	 &1	 &0	 &0	 &0	 &0	 &0	 &2	 &1	&0	&0\\
$M_4$ &0	 &0	 &0	 &0	 &0	 &0	 &0	 &1	 &0	 &0	&0	&0\\
$M_5$ &0	 &0	 &0	 &0	 &1	 &0	 &0	 &0	 &0	 &0	&1	&0\\
$M_6$ &0	 &0	 &0	 &0	 &0	 &0	 &0	 &0	 &0	 &0	&0	&1\\
\end{block}
\end{blockarray}

$A_k=$
 \begin{blockarray}{ccccccccccccc}
&$C_1$ &$ C_2$  & $C_3$ & $C_4$ & $C_5 $& $C_6$ & $C_7$ & $C_8$ & $C_9$ & $C_{10}$ & $C_{11}$ & $C_{12}$\\
\begin{block}{c[cccccccccccc]}
$C_1$ &-0.12	&0	&0	&0	&0	&0	&0	&0	&0	&0	&0.086	&0.03\\
$ C_2$  &0.09	&-0.10	&0	&0	&0	&0	&0	&0.002	&0	&0	&0	&0\\
$C_3$ &0.03	&0.08	&-0.71	&0	&0	&0	&0	&0.001	&0	&0	&0	&0\\
$C_4$ &0	&0	&0	&-10.09	&0	&0	&0	&0	&0	&0	&0	&0\\
$C_5$& 0	&0	&0	&10.09	&0	&0	&0	&0	&0	&0	&0	&0\\
$C_6$ &0	&0	&0	&0	&0	&-0.70	&0	&0	&0	&0	&0	&0\\
 $C_7$ &0	&0	&0	&0	&0	&0.70	&0	&0	&0	&0	&0	&0\\
$C_8$ &0	&0.016	&0.71	&0	&0	&0	&0	&-0.003	&0	&0	&0	&0\\
$C_9$ &0	&0	&0	&0	&0	&0	&0	&0	&-0.2	&0	&0	&0\\
$C_{10}$ &0	&0	&0	&0	&0	&0	&0	&0	&0.2	&0	&0	&0\\
 $C_{11}$ &0	&0	&0	&0	&0	&0	&0	&0	&0	&0	&-0.17	&0\\
$C_{12}$ &0	&0	&0	&0	&0	&0	&0	&0	&0	&0	&0.09	&-0.03\\
\end{block}
\end{blockarray}

\noindent where $C_1: M_1$, $ C_2: M_2$, $C_3: M_3$, $C_4: 2M_1$, $C_5: M_5+M_1 $, $C_6: 2M_2$, $C_7: M_1+M_2$, $C_8: M_4$, $C_9: 2M_3$,  $C_{10}: M_1+M_3$, $C_{11}: M_5$, and $C_{12}: M_6$.\\

Additionally, the parameters were set as follows: $\epsilon=0.001$ and $\displaystyle{u_{ij}=20, i,j=1,2,...,12, i\ne j}$. Using MATLAB R2018b, the linearly conjugate weakly reversible sparse realization ($\tilde{\mathscr{N}},\tilde{ K}$)  was obtained with the corresponding Laplacian map $A_k^{sparse}$. 

$A_k^{sparse}=$
 \begin{blockarray}{ccccccccccccc}
&$C_1$ &$ C_2$  & $C_3$ & $C_4$ & $C_5 $& $C_6$ & $C_7$ & $C_8$ & $C_9$ & $C_{10}$ & $C_{11}$ & $C_{12}$\\
\begin{block}{c[cccccccccccc]}
$C_1$&-0.12&0	&0	&0	&0	&1.05	&0	&0	&0.33	&0	&0	&0\\
$ C_2$&0	&-0.10	&0	&0	&0	&0	&0	&0.002	&0	&0	&0	&0\\
$ C_3$&0	&0.08	&-0.71	&0	&0	&0	&0	&0.001	&0	&0	&0	&0\\
$ C_4$&0	&0	&0	&-2.98 	&0	&0	&0	&0	&0	&0	&0.09	&0.03\\
$ C_5$&0	&0	&0	&0	&0	&0	&0	&0	&0	&0	&0	&0\\
$ C_6$&0.09	&0	&0	&0	&0	&-1.05	&0	&0	&0	&0	&0	&0\\
$ C_7$&0	&0	&0	&0	&0	&0	&0	&0	&0	&0	&0	&0\\
$ C_8$&0	&0.02	&0.71	&0	&0	&0	&0	&-0.003	&0	&0	&0	&0\\
$ C_9$&0.03	&0	&0	&0	&0	&0	&0	&0	&-0.33	&0	&0	&0\\
$ C_{10}$&0	&0	&0	&0	&0	&0	&0	&0	&0	&0	&0	&0\\
$ C_{11}$&0	&0	&0	&2.98	&0	&0	&0	&0	&0	&0	&-0.17	&0\\
$ C_{12}$&0	&0	&0	&0	&0	&0	&0	&0	&0	&0	&0.09	&-0.03\\
\end{block}
\end{blockarray}

The linear conjugacy constants are $c_1=2.28$, $c_2=1.14$, $c_3=1.14$, $c_4=1.14$, $c_5=4.56$, and $c_6=4.56$. Furthermore, the associated system of ODEs is given below: 
 \begin{equation*}
\begin{split}
\frac{dM_1}{dt} &=  -0.1242M_1-5.953M_1^{0.3572}+1.052M_2^{9.4}+0.334M_3^{10.2}+0.172M_5+0.067M_6 \\ 
\frac{dM_2}{dt} &= 0.186M_1-0.095M_2-2.104M_2^{9.4}+.002M_4 \\ 
\frac{dM_3}{dt} &=0.062M_1+0.078M_2-2*0.334M_3^{10.2}-0.714M_3+0.001M_4\\ 
\frac{dM_4}{dt} &=0.016M_2+0.714M_3-0.003M_4\\ 
\frac{dM_5}{dt} &= 2.977M_1^{0.3572}-0.172M_5\\ 
\frac{dM_6}{dt} &=  0.0862M_5-0.0333M_6 \\ 
\end{split}
\end{equation*}

The network $\tilde{\mathscr{N}}$ and its numbers are shown in the following Figure~\ref{sparseschmitz} and Table~\ref{table:sparseschmitz}:

\begin{figure}[h!]
    \centering
    \includegraphics[width=0.75\textwidth]{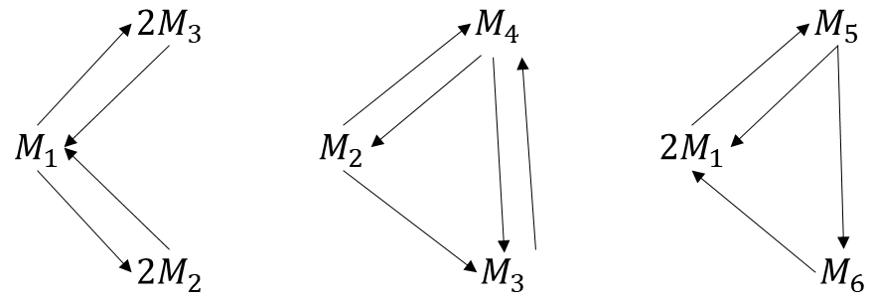}
\caption{Sparse linearly conjugate realization.}
\label{sparseschmitz}
\end{figure}

\begin{table}[H]
\caption{Network number of the sparse network  $\tilde{\mathscr{N}}$ . }
\center
\begin{tabular}{p{8cm} p{2cm}}
\hline
Network number& Value/bounds \\ 
\hline
Number of species & 6\\
\hline
Number of complexes &9\\
\hline
Number of reactions & 13\\
\hline
Number of reactant complexes &9\\
\hline
Number of linkage classes  & 3\\
\hline
Number of terminal strong linkage classes  &3\\
\hline
Rank  &5\\ 
\hline
Deficiency  &1\\ 
\hline
\end{tabular}
\label{table:sparseschmitz}
\end{table}

The $\hat{T}$ matrix of the system is given by:\\

$\hat{T}=$
 \begin{blockarray}{cccccccccc}
&$M_1$ &$ 2M_2$  & $2M_3$ & $M_2$ & $M_3 $& $M_4$ & $2M_1$ & $M_5$ & $M_6$ \\
\begin{block}{c[ccccccccc]}
$M_1$&1 &0 &0 &0 &0 &0 &0.36 &0 &0 \\
$M_2$&0 &9.4 &0 &1 &0 &0 &0 &0 &0\\
$M_3$&0 &0 &10.2 &0 &1 &0 &0 &0 &0\\
$M_4$&0 &0 &0 &0 &0 &1 &0 &0 &0 \\
$M_5$&0 &0 &0 &0 &0 &0 &0 &1 &0 \\
$M_6$&0 &0 &0 &0 &0 &0 &0 &0 &1\\
$L_1$&1 &1 &1 &0 &0 &0 &0 &0 &0 \\
$L_2$&0 &0 &0 &1 &1 &1 &0 &0 &0 \\
$L_3$&0 &0 &0 &0 &0 &0 &1 &1 &1 \\
\end{block}
\end{blockarray}

One readily computes that it has maximal rank, 9, and hence $(\tilde{\mathscr{N}}, \tilde{K})$ is a PL-TIK system.  Since each of the linkage classes has zero deficiency, according to the Deficiency Zero Theorem for PL-TIK systems (Theorem 5 and Corollary 6 , \cite{csl:7}), each subsystem possesses positive equilibria. It then follows from Theorem 4 of \cite{csl:7} that the whole system also has positive equilibria. Hence, the linearly conjugate system $(\mathscr{N}, K)$  also has positive equilibria, which are necessarily complex balanced since the system has zero deficiency. The graphs of the individual trajectories of  ($\mathscr{N}^ *, K^*$)  and $(\tilde{\mathscr{N}}, \tilde{K})$ are depicted in Figure \ref{f3}.

\begin{figure}
\center
\subfigure[]{
\resizebox*{4cm}{!}{\includegraphics{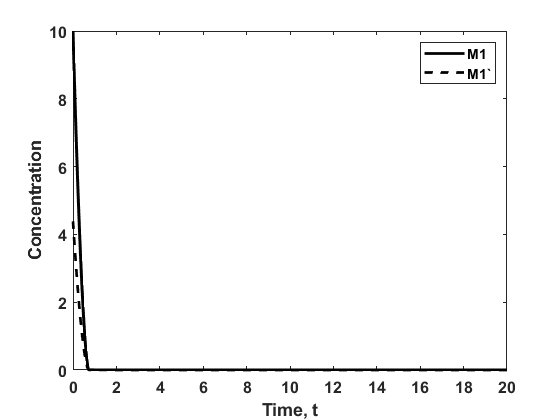}}}\hspace{5pt}
\subfigure[]{
\resizebox*{4cm}{!}{\includegraphics{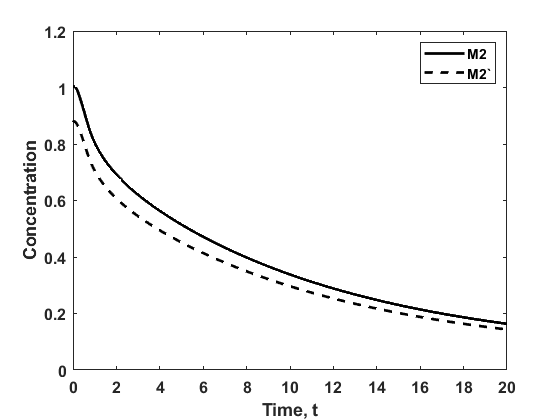}}}\hspace{5pt}
\subfigure[]{
\resizebox*{4cm}{!}{\includegraphics{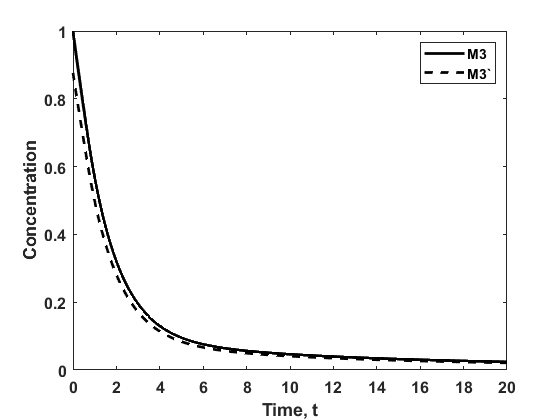}}}\hspace{5pt}
\subfigure[]{
\resizebox*{4cm}{!}{\includegraphics{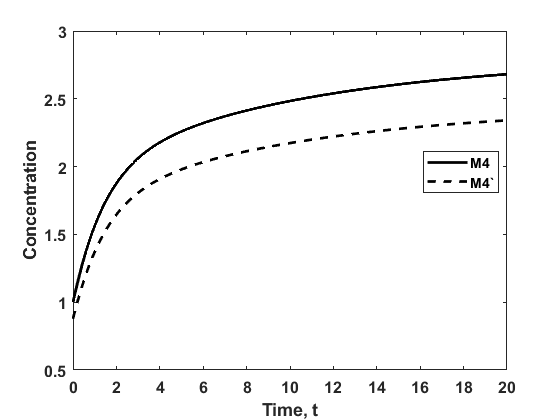}}}\hspace{5pt}
\subfigure[]{
\resizebox*{4cm}{!}{\includegraphics{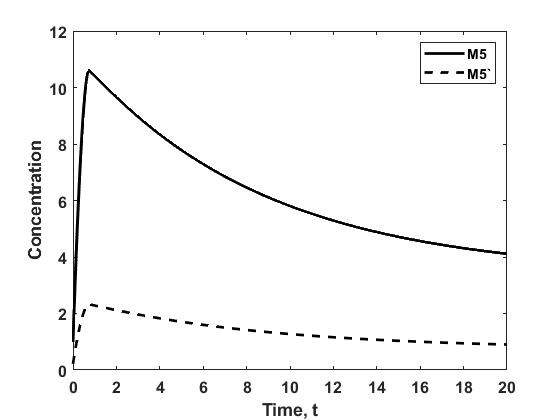}}}\hspace{5pt}
\subfigure[]{
\resizebox*{4cm}{!}{\includegraphics{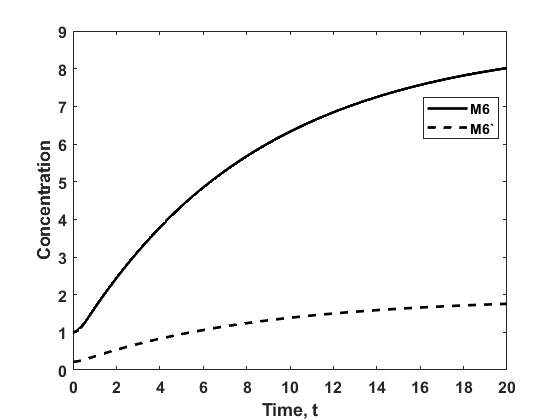}}}
\caption{The graphs of the trajectories for  ($\mathscr{N}^ *, K^*$)  and $(\tilde{\mathscr{N}}, \tilde{K})$. $M_i'$ represents a trajectory in the sparse realization.}
\label{f3}
\end{figure}

There are however several challenges with this ``solution in principle": It may be difficult to compute the CF subsets of a RID kinetic system, which form the basis of the  CF-RM method, as it involves determining if interaction functions (for an infinite number of domain values) are equal. This clarity depends on how explicit and complex the functional expressions are. Similarly, applying the JSC to a CF system, one needs to establish the equality of the factor maps, which is equivalent to the difficulty with interaction functions cited above.\\

In the next section, we identify a large subset of RID kinetics, where the solution approach can be applied in general.

\section {Linear conjugacy of RIP kinetic systems}
This section introduces the large subset of RID kinetics with interaction parameter maps (RIPK). The subset includes power law kinetics (PLK), Hill-type kinetics (HTK)--originally called ``Saturation-Cooperativity" (SC) Formalism \cite{csl:6}, and other published biochemical kinetics such as linlog  \cite{csl:5} and loglin kinetics  \cite{csl:4}. We extend the $T$ matrix concept of \cite{csl:7} to complex factorizable RIP kinetics (denoted by RIP-CFK) and obtain a computationally feasible form of the JSC for this kinetics set, which leads to executable solutions of the linear conjugacy problem. 
\subsection{RIP kinetics: RID kinetics with interaction parameter map}
\begin{definition} A set $\mathscr{K} \in$  RIDK is said to be of type ``RID kinetics with interaction parameter maps" if there is a family of maps $ \left\{ p_{\mathscr{K}}: \mathscr{R} \rightarrow R^{m1} \times ... \times R^{mk} \mid K \in \mathscr{K}\right\}$ such that
\begin{itemize}
\item[i)] $p_K(r) =  p_K(r')  \Rightarrow  I_K(x)_r = I_K(x)_{r'}$ for all $x$ in $\Omega$ and
\item[ii)]$p_K = p_{K'} \Rightarrow I_K(x) = I_{K'}(x)$ for all $x$ in $\Omega$
\end{itemize}
\end{definition}

\begin{example}
PLK with the family of kinetic order matrices, i.e., $p_\mathscr{K}(r)= F_r$, (kinetic order row vector or interaction), is the primary example. Since $I_K(x) = x^F$, the properties i) and ii) are straightforward.
\end{example}

\begin{example}
Hill-type kinetics (HTK)--originally called ``Saturation-Cooperativity (SC) Formalism" in 2007 by Sorribas et al. \cite{csl:6}. We recall the definition of \cite{csl:1}:
\end{example}

\begin{definition} \textbf{Hill-type kinetics (HTK)} is defined as follows: \\
$$K_j(c)=k_j{\displaystyle \prod_{i=1}^{n}\frac{c^{v_{j,i}}_i}{d_{j,i}+c^{v_{j,i}}_i}}$$\\

\noindent with $c\in \mathbb{R}^n_\ge $ (defined by continuity at the boundary), $k_j\in \mathbb{R}_>$, $d_j\in \mathbb{R}^n_\ge $ and $v_j\in \mathbb{R}^n$ for $j=1,...,m$. Note that the $v_j$ have to be nonegative.
\end{definition}

The family of interaction parameter maps is given by $P_K: \mathscr{R} \rightarrow \mathbb{R}^m \times \mathbb{R}^m$  with $p_K(r) = (v_1,...,v_m,d_1,...,d_m)$, where we leave out the index $j$.

\subsection{CF-RM for RIP-NFK and the JSC for RIP-CFK} 
Since under CF-RM, there is a bijection $\eta : \mathscr{R} \rightarrow \mathscr{R}^*$, if $(\mathscr{N}, {K})$ is an NF RIP kinetic system, then $(\mathscr{N}^*, {K})$ is a CF RIP kinetic system with the interaction parameter map $p_{K}^* (\eta (r)) := p_K(r)$.\\

We denote the set of all complex factorizable kinetics with interaction parameter maps with \textbf{RIP-CFK}.\\

For an interaction parameter map $p_K: \mathscr{R} \rightarrow R_{m1} \times ... \times R_{mk}$, we write $p = m1 + ...+ mk$. It is now easy to formally introduce the $T$ matrix of a RIP-CFK kinetics:

\begin{definition}
The T matrix of a RIP-CFK kinetics $K$ is the $p \times n_r$ matrix whose jth column is $p_K(r)^T$, where $\rho(r) = j$. The $\hat{T}$ matrix is the $(p + l) x$  given by adjoining the characteristic functions of the linkage classes as rows to the T matrix.  The rank of the $\hat{T}$ matrix is denoted by $\hat{q}$.
\end{definition}

We have the following useful Proposition:

\begin{proposition}
Let $(\mathscr{N}, {K})$ and $(\mathscr{N}', K')$ be RIP-CFK systems. If $T = T'$, then $\psi_K =  \psi_{K'}$.
\end{proposition}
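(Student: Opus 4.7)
The plan is to unwind the definition of the $T$ matrix and show that column-wise equality forces the parameter values at each reactant complex to agree, which by the defining axiom of an interaction parameter map propagates to the interaction functions, and finally to the factor maps.

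First I would observe that the $T$ matrix of a RIP-CFK system is well-defined precisely because of complex factorizability: for every reaction $r$ with $\rho(r)=c$, the interaction map $I_{K,r}$ is the same, so assigning $p_K(r)$ to column $j$ (with $j$ indexing the reactant complex $c$) does not depend on the choice of $r\in\rho^{-1}(c)$. With $Y=Y'$ fixed in the background (as in Theorem~\ref{thm:4}) the column indexing is common to both systems, so the equality $T=T'$ says exactly that for each reactant complex $c$ and any reactions $r,r'$ with $\rho(r)=c=\rho'(r')$, one has $p_K(r)=p_{K'}(r')$.

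Next I would invoke property~(i) of the RIP definition together with its cross-kinetics reading implicit in property~(ii): the parameter vector attached to a reaction locally determines its interaction function. Concretely, $p_K(r)=p_{K'}(r')$ yields $I_{K,r}(x)=I_{K',r'}(x)$ for all $x\in\Omega$. Thus the single interaction function carried by the reactant complex $c$ under $K$ coincides with the one carried by $c$ under $K'$.

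Finally, I would compare the factor maps coordinate by coordinate. By the definition of $\psi_K$, the $c$-coordinate equals $I_{K,r}(x)$ whenever $c$ is a reactant complex (for any $r$ with $\rho(r)=c$) and equals $1$ otherwise; analogously for $\psi_{K'}$. The reactant complex sets coincide (shared column indexing of $T,T'$), the non-reactant coordinates are trivially equal, and the reactant coordinates were just shown to agree. Hence $\psi_K=\psi_{K'}$. The only real subtlety — and thus the ``hard part'' — is the bookkeeping of the column identification between the two networks; once one makes explicit that $T=T'$ is read under the common complex indexing inherited from $Y=Y'$, the rest is a one-line application of the RIP axiom plus the CF definition of the factor map.
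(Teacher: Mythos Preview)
Your proposal is correct and follows essentially the same route as the paper's proof: $T=T'$ forces the interaction parameter maps to agree, the RIP axiom~(ii) then yields $I_K=I_{K'}$, and the CF structure (factor maps differ from interaction maps only through the reactant map $\rho'$) gives $\psi_K=\psi_{K'}$. You supply more careful bookkeeping on the column identification and the well-definedness of the $T$ matrix than the paper does, but the logical skeleton is identical.
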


\begin{proof} $T = T' \Rightarrow p_K = p_{K'}$ for all $K, K'$ of the same type $\Rightarrow I_K = I_{K'}$  (by definition of interaction parameter map) $\Leftrightarrow \psi_K =  \psi_{K'}$ (since the maps differ only with the reactions map).
Hence, RIP-CF kinetics, it suffices to check a finite set of vectors to establish the coincidence of the factor maps. This allows the extension of the JSC-based MILP algorithms for PL-RDK systems to RIP-CFK systems.
Since the CF-RM transform of a RIP-NFK system is clearly a RIP-CFK system, we obtain a general  computational solution for the linear conjugacy of RIP kinetic systems. 
\end{proof}
\begin{remark}
The set $\{\mathscr{K} \in RIPK \mid \rho(r) = \rho(r') \Rightarrow p_K(r) = p_K(r')\}$ may, in general, be a proper subset of RIP-CFK. This may result in computing a smaller set of linear conjugates as when the whole set RIP-CFK is used. This is a small price one pays for ensuring the computational feasibility. There are, however, various RIP kinetics for which the converse $\psi_K(x) = \psi_{K'}(x) \Rightarrow p_K(r) = p_K(r')$ also holds, so that the corresponding sets are equal. Examples are PLK and $PYK_h$ (the set of poly-PL kinetics with h summands), which form a covering of PYK (cf. a manuscript in preparation by Talabis et al. entitled ``A Weak Reversibility Theorem for poly-PL kinetics and the replicator equation").\\

In  \cite{csl:1}, we introduced the notations PL-RDK and HT-RDKD for the subsets of PLK and HTK respectively, which satisfy $\rho(r) = \rho(r') \Rightarrow p_K(r) = p_K(r')$.  For any other subset A  of RIPK, we will denote $ \{\mathscr{K} \in RIPK \mid \rho(r) = \rho(r') \Rightarrow p_K(r) = p_K(r')\}$ with A-RDP (kinetics with reactant-determined parameter maps). This notation is consistent with earlier ones since the corresponding letters there indicate the specific parameter maps, too.
\end{remark}

\section{Extension of MILP algorithms to RIP-CFK systems}
Cortez et al. \cite{csl:8} extended the MILP algorithm developed by Johnston et al. \cite{csl:16} to find linearly conjugate networks of PL-RDK systems. Aside from linear conjugacy, other desirable properties can be incorporated in the algorithm such as weak reversibility and minimal deficiency (e.g. deficiency zero). In this study, we focus on extending the algorithm to find linearly conjugates of RIP-CFK kinetic systems. 

\subsection{Key components of the MILP algorithm }

The algorithm considers two CF systems: the original system $(\mathscr{N}, {K})$ with $\mathscr{N} = (\mathscr{S}, \mathscr{C}, \mathscr{R})$ and
the target system $(\mathscr{N}', {K}')$ with $\mathscr{N}' = (\mathscr{S}', \mathscr{C}', \mathscr{R}')$. The algorithm determines the corresponding network structure of the target system satisfying the linear conjugacy property.  The two networks $\mathscr{N}$  and $\mathscr{N}'$ have the same set of species and complexes. As a consequence, their corresponding molecularity matrices and the coefficient maps coincide. The algorithm requires that $\mathscr{R}$ and ${K}$ be known while $\mathscr{R}'$ and ${K}'$ are to be obtained. The following are needed to be ascertained prior to the MILP implementation: 
\begin{itemize}
\item molecularity matrix $Y\in \mathbb{R}^{m\times n}_{\ge 0}$;
\item matrix $M=Y \cdot A_k$, where $A_k$ is the Laplacian map;
\item parameter $\epsilon >0$, that is set to be sufficiently small; and  
\item parameter $u_{ij}>0$, where $i,j=1,...,m$, $i \ne j$.
\end{itemize}

\begin{remark} 
Note that $\epsilon$ and $u$ are introduced to ensure the correct structure of the linearly conjugate realization. 
\end{remark}

\subsection{MILP algorithm to CF systems}

The MILP algorithm finds a sparse linearly conjugate realization of the original network $\mathscr{N}$ . A sparse realization contains the minimum number of reactions, hence the associated objective function of the MILP model is \\ 
\begin{align}
\text{Minimize} \displaystyle \sum^m _{i,j=1}\delta_{ij}.
\end{align}
There are two sets of constraints in the model which indicate the linear conjugacy condition and desired structure of the network.  \\ 
\begin{align}
\left( {\bf \text LC} \right)&\left\{ \begin{gathered}
  Y \cdot {A_b} = {C^{ - 1}} \cdot M,{\text{ }}C = diag\left\{ c \right\} \hfill \\
  \sum\limits_{i = 1,i \ne j}^m {\left[ {{A_b}} \right]} {}_{ij} = 0,j = 1,...,m \hfill \\
  {\left[ {{A_b}} \right]_{ij}} \geqslant 0,{\text{for }}i = 1,...,m,{\text{  }}i \ne j \hfill \\
  {\left[ {{A_b}} \right]_{ii}} < 0,{\text{ for }}i = 1,...,m \hfill \\
   \epsilon  \leqslant {c_i} \leqslant \frac{1}{\epsilon},{\text{  for }}i = 1,...,n \hfill \\ 
\end{gathered}  \right.\label{equation:constraint1}
\end{align}
\begin{align}
\left( {\bf \text LC - S} \right)&\left\{ \begin{gathered}
  0 \leqslant  - {\left[ {{A_b}} \right]_{ij}} + {u_{ij}} \cdot {\delta _{ij}},i,j = 1,...,m,{\text{  }}i \ne j \hfill \\
  0 \leqslant {\left[ {{A_b}} \right]_{ij}} -  \epsilon  \cdot {\delta _{ij}},i,j = 1,...,m,{\text{  }}i \ne j \hfill \\
  {\delta _{ij}} \in \left\{ {0,1} \right\}{\text{  for }}i,j = 1,...,m,{\text{  }}i \ne j \hfill \\ 
\end{gathered}  \right.\label{equation:constraint2}
\end{align}

\begin{table*}[h!]
\caption{List of variables used in the MILP}
\label{table3} 
\begin{tabular}{ll}
\hline\noalign{\smallskip}
Notation & Description  \\
\noalign{\smallskip}\hline\noalign{\smallskip}
${\delta _{ij}},i,j = 1,...,m$ & binary variable that keeps track of the presence of the reaction in the target network \\
${\left[ {{A_b}} \right]_{ij}},i,j = 1,2,...,m$ & kinetic matrix with the same structure as the target network\\
$c$ & a vector which is an element of $\mathbb{R}_{ > 0}^n$ \\
$C$ & a diagonal matrix $diag(c)$ with vector $c \in \mathbb{R}_{ > 0}^n$\\
\noalign{\smallskip}\hline
\end{tabular}
\end{table*}
Table \ref{table3} shows the description of the variables used in the model. Constraint (\ref{equation:constraint1}) imposes the linear conjugacy specification while constraint (\ref{equation:constraint2}) ensures that the target network $\mathscr{N}'$ has the correct structure. A dense linearly conjugate network can also be determined by considering the maximization problem analog. \\

The optimal solution (if it exists) of the MILP would yield the matrix $A_b$ with the same structure as $\mathscr{N}'$, and the conjugacy constant vector $c$. The Laplacian map $A_k'$ of the target network is computed as:  
\begin{center}
$A_k'=A_b\cdot D$
\end{center}
where $D=diag(e)$  and ${e_j} = \left\{ {\begin{array}{*{20}{c}}
{{c^{F \cdot j}},{\text{if \ complex $j$ is  a reactant of some reaction }}k}\\
{1,\text{otherwise}}
\end{array}} \right.$

\subsection{MILP algorithm to RIP-CFK systems}

It is important to note that the algorithm developed by Cortez et al. \cite{csl:8} is only applicable to  CF systems (e.g. PL-RDK). For NF systems, the MILP cannot be immediately utilized to generate linearly conjugate realizations. It is necessary to transform it into a CF system through the CF-RM algorithm described in Section 5. This framework is applicable to RIPK systems which include both the power law kinetics (PLK) and Hill-type kinetics (HTK). The computation of the matrix $A_b$ and linear conjugacy vector $c$ is the same for both systems. The process of finding linearly conjugate realizations differs only in the derivation of corresponding sytem of ODEs wherein the respective kinetic order matrix/interaction parameter matrix is incorporated accordingly. Additionally, to obtain a proper form of the rational terms in the target HTK system, a linear scaling of the variable of rational term must be carried out, that is the variable must be multiplied by its corresponding linear conjugacy constant. This approach is similar to the approach of \cite{csl:19a} to linear conjugacy of bio-CRNs.

\section{Application to Hill-type kinetic system}
In  \cite{csl:1} and \cite{csl:9}, we introduced CRN representations of GMA systems--as defined in Biochemical Systems Theory (BST)--by means of the biochemical maps usually used to define them. These representations are actually independent of the power law kinetics assigned to the reactions from BST and we will use them for other RID kinetic systems too, as illustrated in the following examples. \\

In the following, after a brief review of Hill-type kinetics, we consider a reference metabolic system of \cite{csl:6}. We apply the MILP algorithm to Hill-type kinetics and compare the set of linear conjugates with those of power law kinetics on the same chemical reaction network.

\subsection{Review of Hill-type kinetics}
The set of Hill-type kinetics was introduced in 2007 by Sorribas et al. \cite{csl:6} under the name of ``Saturation-Cooperativity Formalism" (SC-Formalism). This framework generalizes the well-known Michaelis-Menten and Hill functions in one variable. The term ``Hill-type kinetics" (HTK) was introduced in 2013 in the paper of Wiuf and Feliu \cite{csl:20}. In  \cite{csl:1}, it was shown that a Hill-type kinetics can be written as follows: \\
Given 
\begin{itemize}
\item $e_j:\Omega_K\rightarrow \mathbb{R}^\mathscr{S}_>$ with $e_j(x)=(x^{\rho(j)_1}_1,..., x^{\rho(j)_m}_m), r_j\in\mathscr{R}$
\item $d_j: \mathbb{R}^\mathscr{S}_>\rightarrow \mathbb{R}^\mathscr{S}_>$ with $d_j(x)=x+D_j$, $r_j\in \mathscr{R}$
\item $m: \mathbb{R}^\mathscr{S}_>\rightarrow \mathbb{R}_>$ with $m(x)=\Pi x_i, i=1,...,m.$ 
\end{itemize}
then $I_H = I_1/I_2$ , with $I_1$ a PLK interaction map with kinetic order matrix $F$ and $(I_2 (x ))_ j = m \cdot d_j \cdot e_j (x)$.
Furthermore,  the dissociation vectors $d_j$ (s. Definition 22) were organized in an $r \times m$ matrix called the ``��dissociation matrix" and the set of complex factorizable Hill-type kinetics was denoted by HT-RDKD (Hill-type with reactant-determined kinetic and dissociation), expressing the fact that it is the pre-image of the interaction parameter map given by the kinetic order and dissociation matrices.\\

\begin{remark}
The method for determining linear conjugates for Bio-CRNs in \cite{csl:18} is applicable to HTK if the exponents are non-negative integers.
\end{remark}

\subsection{The reference system with Hill-type kinetics}
Now, we apply the integrated algorithm to a particular biological system. Specifically, we consider a metabolic network with one positive feedforward and a negative feedback (see Figure \ref{f2}) taken from the published work of  \cite{csl:6}.\\ 

\begin{figure}[!htb]
\centering
    \includegraphics[width=0.45\textwidth]{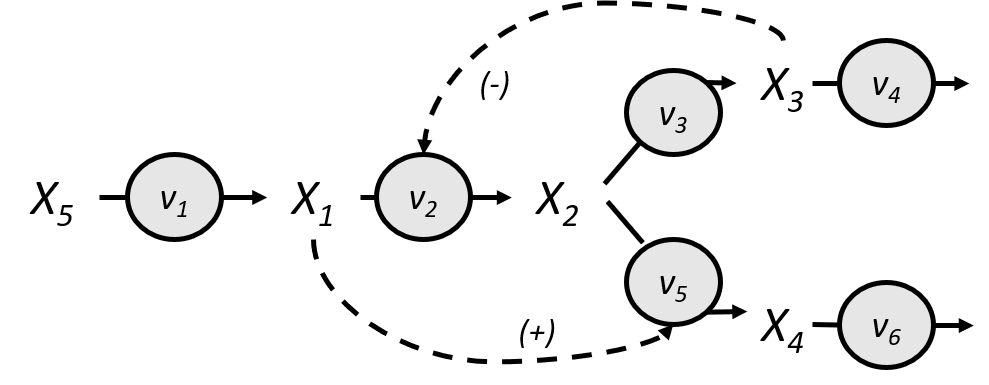}
\caption{An example of Hill-type metabolic network model \cite{csl:6}. }
\label{f2}
\end{figure}

 The corresponding embedded representation of the metabolic network, with $X_5$ as an independent variable, is as follows: 

\begin{multicols}{2}
\noindent$R_1: 0\rightarrow X_1$\\
$R_2: X_1+X_3 \rightarrow X_3+X_2$\\
$R_3: X_2 \rightarrow X_3$\\
$R_4: X_1+X_2 \rightarrow  X_1+X_4$\\ 
$R_5: X_3 \rightarrow 0$\\
$R_6: X_4 \rightarrow 0$\\
\end{multicols}

We apply the MILP algorithm on the SC Formalism approximation by  \cite{csl:6}  of the reference model depicted in Figure \ref{f2}. Using the framework,  the corresponding system of ODEs for the reference model is given as:

\begin{equation} 
\begin{split}
\frac{dX_1}{dt} = &V_1-\frac{V_2X^{n_{21}}_1X^{n_{23}}_3}{(k_{21}+X^{n_{21}}_1)(k_{23}+X^{n_{23}}_3)}\\
\frac{dX_2}{dt} = &\frac{V_2X^{n_{21}}_1X^{n_{23}}_3}{(k_{21}+X^{n_{21}}_1)(k_{23}+X^{n_{23}}_3)}-\frac{V_3X^{n_{32}}_2}{k_{32}+X^{n_{32}}_2}-\frac{V_4X^{n_{41}}_1X^{n_{42}}_2}{(k_{41}+X^{n_{41}}_1)(k_{42}+X^{n_{42}}_2)}\\
\frac{dX_3}{dt} = &\frac{V_3X^{n_{32}}_2}{k_{32}+X^{n_{32}}_2}-\frac{V_5X^{n_{53}}_3}{k_{53}+X^{n_{53}}_3}\\
\frac{dX_4}{dt} = &\frac{V_4X^{n_{41}}_1X^{n_{42}}_2}{(k_{41}+X^{n_{41}}_1)(k_{42}+X^{n_{42}}_2)}-\frac{V_6X^{n_{64}}_4}{k_{64}+X^{n_{64}}_4}
\end{split}
\end{equation}

\noindent where $V_1=8$, $V_2=84.2175$, $V_3=8$, $V_4=115.341$, $V_5=8$, and $V_6=8$. The interaction parameter matrix (containing the kinetic orders and dissociation constants) for the given system is: 

\[
  \begin{bmatrix}
0		&0		&0		&0		&0		&0		&0		&0\\
n_{21}	&0	 	&n_{23}	&0		&k_{21}	&0		&k_{32}	&0\\
0	 	&n_{32}	&0		&0		&0		&k_{32}	&0		&0\\
n_{41}	&n_{42}	&0		&0		&k_{41}	&k_{42}	&0		&0\\
0		&0	 	&n_{53}	&0		&0		&0		&k_{53}	&0\\
0		&0		&0	 	&n_{64}	&0		&0		&0		&k_{64}

  \end{bmatrix}
\]  
\noindent with $n_{21}=1$, $n_{23}=-0.8429$, $n_{32}=1$, $n_{41}=2.9460$, $n_{42}=3$, $n_{53}=1$,  $n_{64}=1$, $k_{21}=0.6705$, $k_{41}=0.8581$, $k_{42}=44.7121$, $k_{53}=1$,  and $k_{64}=1$.

Using the parameter values $u_{ij}=20,i,j=1,2,...,9$ for $i\ne j$ and $\epsilon=0.1$ and considering the same matrices $Y$ and $M$,  the sparse linearly conjugate network of the Hill-type system is \\
\begin{multicols}{2}
\noindent$R^{sparse2}_1: 0\rightarrow X_1$\\
$R^{sparse2}_2: X_1+X_3 \rightarrow X_3+X_2$\\
$R^{sparse2}_3: X_2 \rightarrow X_3$\\
$R^{sparse2}_4: X_1+X_2 \rightarrow  X_1+X_4$\\ 
$R^{sparse2}_5: X_3 \rightarrow 0$\\
$R^{sparse2}_6: X_4 \rightarrow 0$\\
\end{multicols}
\noindent with the corresponding system of ODEs

\begin{equation} 
\begin{split}
\frac{dX_1}{dt} = &\bar{V}_1-\frac{\bar{V}_2X^{n_{21}}_1X^{n_{23}}_3}{(k_{21}+{(c_1X_1)}^{n_{21}})(k_{23}+{(c_3X_3)}^{n_{23}})} \\
\frac{dX_2}{dt} = &\frac{\bar{V}_2X^{n_{21}}_1X^{n_{23}}_3}{(k_{21}+(c_1X_1)^{n_{21}})(k_{23}+(c_3X_3)^{n_{23}})}-\frac{\bar{V}_3X^{n_{32}}_2}{k_{32}+(c_2X_2)^{n_{32}}}-\frac{\bar{V}_4X^{n_{41}}_1X^{n_{42}}_2}{(k_{41}+(c_1X_1)^{n_{41}})(k_{42}+(c_2X_2)^{n_{42}})}\\
\frac{dX_3}{dt} = &\frac{\bar{V}_3X^{n_{32}}_2}{k_{32}+(c_2X_2)^{n_{32}}}-\frac{\bar{V}_5X^{n_{53}}_3}{k_{53}+(c_3X_3)^{n_{53}}}\\
\frac{dX_4}{dt} = &\frac{\bar{V}_4X^{n_{41}}_1X^{n_{42}}_2}{(k_{41}+(c_1X_1)^{n_{41}})(k_{42}+^(c_2X_2)^{n_{42}})}-\frac{\bar{V}_5X^{n_{54}}_4}{k_{54}+(c_4X_4)^{n_{54}}}.
\end{split}
\end{equation}

\noindent where $\bar{V}_1=0.8$, $\bar{V}_2=12.0921$, $\bar{V}_3=8$, $\bar{V}_4=10185531.88$, $\bar{V}_5=8$, and  $\bar{V}_6=8$.

The linearly conjugate dense realization was also obtained. The structure of the network is given as: 
\begin{multicols}{2}
\noindent$R^{dense2}_1: 0\rightarrow X_1$\\
$R^{dense2}_2: X_1+X_3 \rightarrow X_3+X_2$\\
$R^{dense2}_3: X_1+X_3 \rightarrow X_3$\\
$R^{dense2}_4: X_2 \rightarrow 0$\\
$R^{dense2}_5: X_2 \rightarrow X_3+X_2$\\
$R^{dense2}_6: X_2 \rightarrow X_3$\\
$R^{dense2}_7: X_3\rightarrow  0$\\ 
$R^{dense2}_8: X_1+X_2 \rightarrow  X_1$\\ 
$R^{dense2}_9: X_1+X_2 \rightarrow  X_1+X_4$\\ 
$R^{dense2}_{10}: X_4 \rightarrow 0$\\\
\end{multicols}

\noindent The conjugacy constants of the derived network are: $c_1=2.9555
$, $c_2=9.9140$, $c_3=0.4$, and $c_4=10$.  Using these constants and the computed $A_b$, we obtained the corresponding Kirchhoff matrix for the network: 

\[
A^{dense2}_k=
  \begin{bmatrix}
-2.7068	&0	&0		&0	&1.840		&0	&0		&0	&3.062\\
2.7068	&0	&0		&0	&0			&0	&320.697	&0	&0\\
0		&0	&-85.470	&0	&0			&0	&0		&0	&0\\
0		&0	&25.480	&0	&52.067		&0	&0		&0	&0\\
0		&0	&0		&0	&-54.168		&0	&0		&0	&0\\
0		&0	&59.990	&0	&0.260		&0	&0		&0	&0\\
0		&0	&0		&0	&0			&0	&-37310.233	 &0	&0\\
0		&0	&0		&0	&0			&0	&36989.536	 &0	&0\\
0		&0	&0		&0	&0			&0	&0		 &0	&-3.062

  \end{bmatrix}
.
\]

\noindent The associated ODEs for the dense realization is

\begin{equation}
\begin{split} 
\frac{dX_1}{dt} = & \bar{W}_1-\frac{\bar{W}_2X^{n_{21}}_1X^{n_{23}}_3}{(k_{21}+(c_1X_1)^{n_{21}})(k_{23}+(c_3X_3)^{n_{23}})}-\frac{\bar{W}_3X^{n_{31}}_1X^{n_{33}}_3}{(k_{31}+(c_1X_1)^{n_{31}})(k_{33}+(c_3X_3)^{n_{33}})}\\
\frac{dX_2}{dt} = & \frac{\bar{W}_2X^{n_{21}}_1X^{n_{23}}_3}{(k_{21}+(c_1X_1)^{n_{21}})(k_{23}+(c_3X_3)^{n_{23}})}- \frac{\bar{W}_4X^{n_{42}}_2}{(k_{42}+(c_2X_2)^{n_{42}})}-\frac{\bar{W}_6X^{n_{62}}_2}{(k_{62}+(c_2X_2)^{n_{62}})}-\\
&\frac{\bar{W}_8X^{n_{81}}_1X^{n_{82}}_2}{(k_{81}+(c_1X_1)^{n_{81}})(k_{82}+(c_2X_2)^{n_{82}})}- \frac{\bar{W}_9X^{n_{91}}_1X^{n_{92}}_2}{(k_{91}+(c_1X_1)^{n_{91}})(k_{92}+(c_2X_2)^{n_{92}})}\\
\frac{dX_3}{dt} = & \frac{\bar{W}_5X^{n_{52}}_2}{(k_{52}+(c_2X_2)^{n_{52}})}+\frac{\bar{W}_6X^{n_{62}}_2}{(k_{62}+(c_2X_2)^{n_{62}})}-\frac{\bar{W}_7X^{n_{73}}_3}{(k_{73}+(c_3X_3)^{n_{73}})}\\
\frac{dX_4}{dt} = & \frac{\bar{W}_9X^{n_{91}}_1X^{n_{92}}_2}{(k_{91}+(c_1X_1)^{n_{91}})(k_{92}+(c_2X_2)^{n_{92}})}-\frac{\bar{W}_{10}X^{n_{104}}_4}{(k_{104}+(c_4X_4)^{n_{104}})}
\end{split}
\end{equation}

\noindent with $\bar{W}_1=2.7068$, $\bar{W}_2=54.3516$, $\bar{W}_3=127.9649$, $\bar{W}_4=7.0009$, $\bar{W}_5=52.067$, $\bar{W}_6=0.9914$, $\bar{W}_7=8$, $\bar{W}_8=2372.74$, $\bar{W}_9=273674.2$, and $\bar{W}_{10}=8$. The kinetic orders and dissociation constants are $n_{21}=n_{31}=1$, $n_{23}=n_{33}=-0.8429$, $n_{42}=n_{52}=n_{62}=1$, $n_{73}=1$, $n_{81}=n_{91}=2.9460$, $n_{82}=n_{92}=3$, and $n_{104}=1$, $k_{21}=k_{31}=0.6705$, $k_{23}=k_{33}=3.9065$, $k_{42}=k_{52}=k_{62}=1$, $k_{73}=1$, $k_{81}=k_{91}=0.8581$, $k_{82}=k_{92}=44.7121$, and $k_{104}=1$.\\

The linearly conjugate sparse network has also 6 reactions which is equal to the number of reactions of the derived linearly conjugate sparse system with power-law kinetics. Whereas, the dense realization of the SC model has 10 reactions. The graphs of the individual trajectories of the original Hill-type system and the linearly conjugate systems  are depicted in Figures~\ref{scsparse:C1}-\ref{scsparse:C4} and Figures~\ref{scdense:D1}-\ref{scdense:D4}, respectively.

\section{Conclusion}
Different networks could generate the same set of ODEs making them dynamically equivalent. In the past few years, various authors have pioneered the use of MILP algorithms for determining linear conjugacy between MAK systems \cite{csl:19,csl:16a,csl:16,csl:16b}, between rational functions systems \cite{csl:19a}, between GMAK systems \cite{csl:16c} and between PL-RDK systems \cite{csl:8}. In the work of \cite{csl:8}, they extended the JSC for linear conjugacy from MAK systems to PL-RDK systems. It is limited to power law kinetic systems with branching reactant complexes that have identical kinetic orders. In this study, we further extended the algorithm for branching reactant complexes with different kinetic orders.

We summarize below main results presented in this paper:
\begin{enumerate}

\item We showed that any non-complex factorizable (NF) RID kinetic system can be dynamically equivalent to a CF system via CF-transformation (Theorem 1).  

\item We further illustrated the usefulness of CF-RMA through the extended proof of Subspace Coincidence Theorem for the kinetic and stoichiometric subspaces (KSSC) of NF kinetic systems. 

\item We extended the JSC for linear conjugacy to the CF subset of RID kinetic systems, i.e., those whose interaction map $I_K: \Omega \rightarrow \mathbb{R}^\mathscr{R}$ factorizes via the space of complexes $\mathbb{R}^\mathscr{C}$:  $I_K = I_k\circ \psi_K$ with $\psi_K:  \Omega \rightarrow \mathbb{R}^\mathscr{C}$ as factor map and $I_k = diag(k)\circ \rho'$   with  $\rho': \mathbb{R}^\mathscr{C} \rightarrow \mathbb{R}^\mathscr{R}$ assigning the value at a reactant complex to all its reactions (Theorem 4).

\item We demonstrated (with running examples: Examples 2 - 4) that linear conjugacy can be generated for any RID kinetic systems by applying the JSC to any NF kinetic system that are transformed to CF kinetic system. The extended JSC for linear conjugacy to CF-RID systems is combined with the CF-RM method to provide the general computational solution to construct linear conjugates of any RID system.  

\item For a large subset of RID kinetic systems RIPK, which have interaction parameter maps, we illustrated how the proposed approach of this paper can also be applied and that the computational solution is always feasible. We presented an example of HTK which was also known as SC Formalism.

\end{enumerate}




\section*{Acknowledgments}
We thank Casian Pantea for presenting the idea of transforming any power law kinetic system to a dynamically equivalent reactant-determined system, which was the basis for the development of the CF-RM method. ARL held research fellowships from De La Salle University and would like to acknowledge the support of De La Salle University's Research Coordination Office.

\section*{Conflict of interest}
We have no conflicts of interest to disclose.

\newpage
\section*{Supplementary Materials}

\begin{table}[H]
\caption{List of abbreviations}
\small
\center
\begin{tabular}{|l|l|}
	\hline
	\textbf{Abbreviations} & \textbf{Meaning}\\
	\hline
	\hline
	CF & Complex Factorizable \\
	\hline
	CFM & maximal CF-subsystem\\
	\hline
	CFS & CF-subsets\\
	\hline
	CF-RM & Complex Factorization by Reactant Multiples \\
	\hline
	CKS & Chemical Kinetic System \\
	\hline
	CRN & Chemical Reaction Network \\
	\hline
	CRNT & Chemical Reaction Network Theory \\
	\hline
	FSS & Factor Span Surjective \\
	\hline
	GMAK & Generalized Mass Action Kinetics \\
	\hline
	HTK & Hill-Type Kinetics \\
	\hline
	JSC & Johnston-Siegel Criterion \\
	\hline
	KSSC & Kinetic and Stoichiometric Subspace Coincidence \\
	\hline
	LRD & Low Reactant Deficiency \\
	\hline
	MAK & Mass Action Kinetics \\
	\hline
	MILP & Mixed Integer Linear Programming \\
	\hline
	NDK & Non-reactant Determined Kinetics\\
	\hline
	NF & Non-complex Factorizable \\
	\hline
	ODE & Ordinary Differential Equation \\
	\hline
	PLK & Power Law Kinetics \\
	\hline
	PL-RDK & Power Law - Reactant Determined Kinetics \\
	\hline
	PL-TIK & $\bf{\hat{T}}-$rank maximal Kinetics\\
	\hline
	PT & Point Terminal \\
	\hline
	RDK & Reactant Determined Kinetics \\
	\hline
	RID & Rate constant Interaction map Decomposable \\
	\hline
	RIDK & Rate constant Interaction map Decomposable Kinetics \\
	\hline
	RIPK & RID kinetics with Intersection Parameters map\\
	\hline
	RIP-CFK & CF RIPK\\
         \hline
	RIP-NFK & NF RIPK\\
	\hline
	SC & Saturation-Cooperativity\\
	\hline
	SFRF & Species Formation Rate Function \\
	\hline
	SRD & Sufficient Reactant Deficiency \\
	\hline
	TBD & Terminality Bounded by Deficiency \\
	\hline
	TND & Terminality Not Bounded by Deficiency \\
	\hline
	\end{tabular}
	\end {table}

        \begin{table}[H]
        \caption{List of symbols}
        \small
	\center
	\begin{tabular}{|l|l|}
	\hline
	\textbf{List of Symbols} & \textbf{Meaning} \\
	\hline
	\hline
	$\mathbb{R}^{\mathscr{C}}$ & complex vector space \\
	\hline
	$( \mathscr{N}^{\ast}, K^{\ast} )$ & CRN of CF-transform of an NF system \\
	\hline
	$\delta$ & deficiency of a CRN \\
	\hline
	$\psi_K$ & factor map \\
	\hline
	$I_a$ & incidence mapping \\
	\hline
	$I_K$ & interaction mapping \\
	\hline
	$A_k$ & $k$-Laplacian map \\
	\hline
	$F$ & kinetic order matrix \\
	\hline
	$Y$ & matrix of complexes \\
	\hline
	$N_R$ & number or CF-subsets \\
	\hline
	$n$ & number of complexes \\
	\hline
	$l$ & number of linkage classes \\
	\hline
	$l^{\ast}$ & number of linkage classes of $( \mathscr{N}^{\ast}, K^{\ast} )$ \\
	\hline
	$l_{b}^{\ast}$ & number of new linkage classes due to link-breaking \\
	\hline
	$l_{r}^{\ast}$ & number of new linkage classes by new reactants \\
	\hline
	$n_r$ & number of reactants \\
	\hline
	$r$ & number of reactions \\
	\hline
	$m$ & number of species \\
	\hline
	$sl$ & number of strong linkage classes \\
	\hline
	$t$ & number of terminal linkage classes \\
	\hline
	$\pi$ & product mapping \\
	\hline
	$s$ & rank of the CRN \\
	\hline
	$\delta_\rho$ & reactant deficiency \\
	\hline
	$\rho$ & reactant mapping \\
	\hline
	$q$ & reactant rank \\
	\hline
	$R$ & reactant subspace \\
	\hline
	$\mathbb{R}^{\mathscr{R}}$ & reaction vector space \\
	\hline
	$\mathscr{R}(y)$ & set of branching reactions \\
	\hline
	$\mathscr{C}$ & set of complexes \\
	\hline
	$\mathscr{L}$ & set of linkage classes \\
	\hline
	$E_+ ( \mathscr{N}, K )$ & set of positive equilibria of CKS \\
	\hline
	$\mathscr{R}$ & set of reactions \\
	\hline
	$\mathscr{S}$ & set of species \\
	\hline
	$\mathbb{R}^{\mathscr{S}}$ & species vector space \\
	\hline
	$S$ & stoichiometric subspace of CRN \\
	\hline
	\end{tabular}
	\end{table}


\begin{figure}[H]
\center
\subfigure[]{
\resizebox*{3cm}{!}{\includegraphics{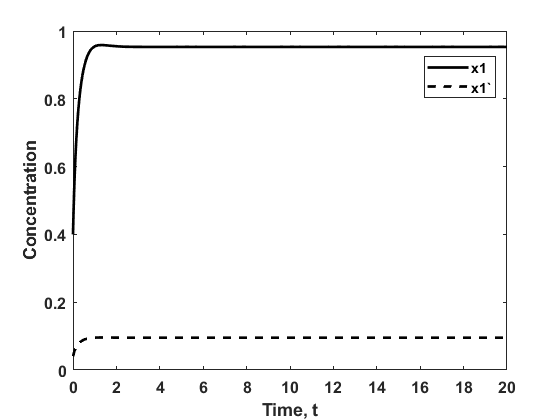}}}\hspace{1pt}
\subfigure[]{
\resizebox*{3cm}{!}{\includegraphics{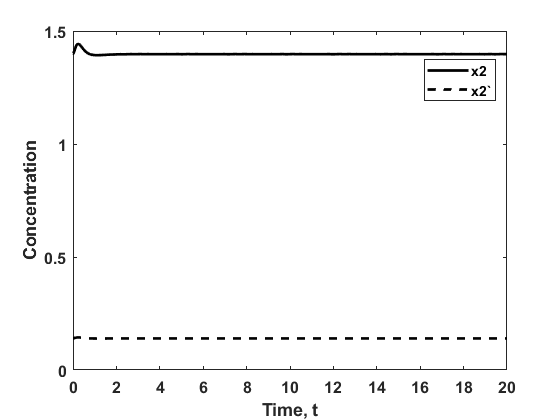}}}\hspace{1pt}
\subfigure[]{
\resizebox*{3cm}{!}{\includegraphics{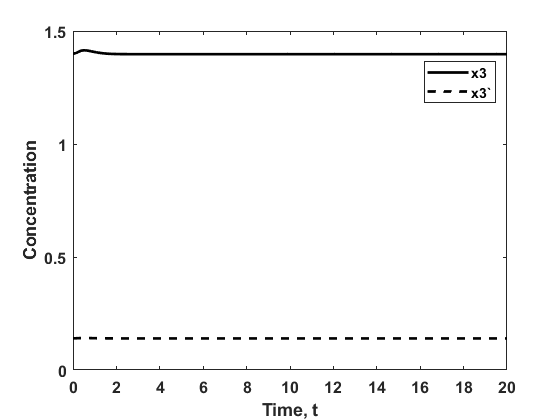}}}\hspace{1pt}
\subfigure[]{
\resizebox*{3cm}{!}{\includegraphics{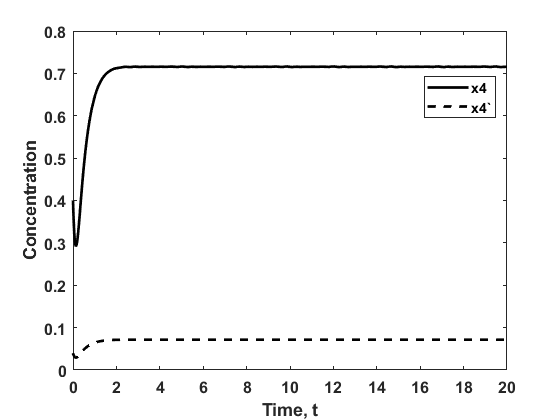}}}\hspace{1pt}
\subfigure[]{
\resizebox*{3cm}{!}{\includegraphics{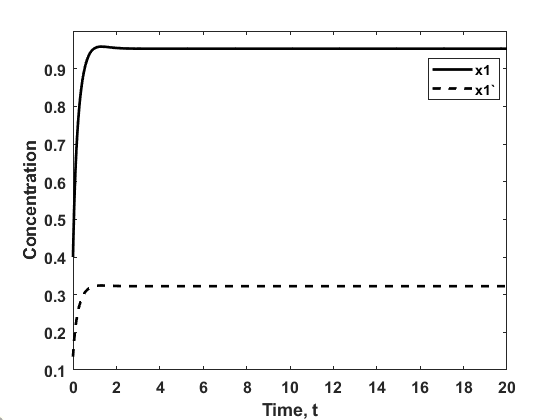}}}\hspace{1pt}
\subfigure[]{
\resizebox*{3cm}{!}{\includegraphics{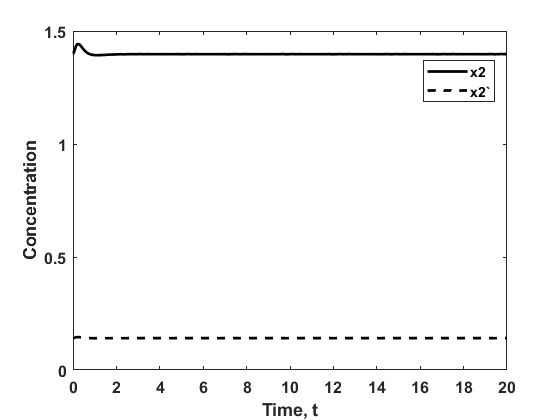}}}\hspace{1pt}
\subfigure[]{
\resizebox*{3cm}{!}{\includegraphics{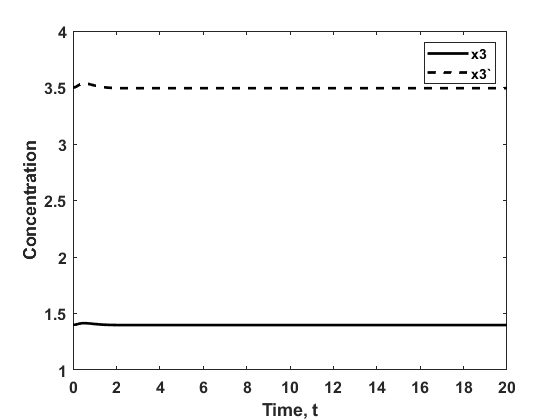}}}\hspace{1pt}
\subfigure[]{
\resizebox*{3cm}{!}{\includegraphics{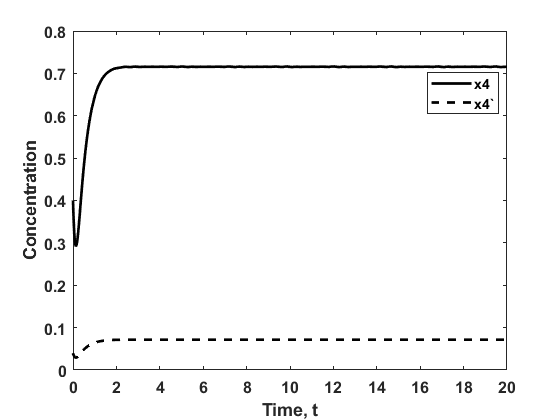}}}
\caption{The graphs of the trajectories for the original hill-type model and a linearly conjugate of sparse (first row) and dense (second row) realization. $X_i'$ represents a trajectory in the sparse realization (shown in first row) and dense realization(shown in second row).}
\label{scsparsedense}
\end{figure}

\end{document}